\def\seq#1_#2{\langle #1_#2:#2\in\omega\rangle}
\def\fc#1|#2{#1\uparrow#2}
\def\ain{\subseteq^*}
\def\cl#1{\overline{#1}}
\def\K{{\cal K}}
\def\C{{\mathfrak c}}
\def\set#1:#2.{{\{\,#1: #2\,\}}}
\def\Z{{\mathbb Z}}
\newif\ifdraft
\title{Precompact groups and convergence.}
\author{Alexander Shibakov}
\ifdraft\usepackage{showlabels}\fi
\newtheorem{theorem}{Theorem}
\newtheorem{lemma}{Lemma}
\newtheorem{corollary}{Corollary}
\newtheorem{remark}{Remark}
\newtheorem{definition}{Definition}
\newtheorem{question}{Question}
\newlist{countup}{enumerate}{10}
\setlist[countup]{label={\rm(\arabic*)}, ref=(\arabic*)}
\newlist{through}{enumerate}{10}
\setlist[through]{label={\rm(\arabic*)}, ref=(\arabic*)}
\begin{document}
\maketitle

\def\lad{\la{D}}
\def\las{\la{S}}
\def\lasm{\la{S^m}}
\def\lasi{\la{S^i}}
\def\lade{\la{2D}}
\def\lase{\la{2S}}
\def\lasem{\la{2S^m}}
\def\lasei{\la{2S^i}}
\def\lado{\lad\setminus\lade}
\def\laso{\las\setminus\lase}
\def\lasom{\lasm\setminus\lasem}
\def\lasoi{\lasi\setminus\lasei}
\def\laf{\la{F}}
\def\U{{\cal U}}
\def\kw{\mathop{k_\omega}}
\def\Pty{{\bf p}}
\def\B{{\mathbb B}}
\def\UU{{\mathbb U}}
\def\S{{\mathbb S}}
\def\SS{{\cal S}}
\def\limD{*}
\def\oG{\bar G}
\def\leql{\leq_L}
\def\geql{\not\leql}
\def\leqp{\leq_p}
\def\geqp{\geq_p}
\def\hgt{{\rm ht}}
\def\MM#1{${\mathbb M}(#1)$}
\def\kwk{{\kw(\K)}}
\def\la#1{{\langle{#1}\rangle}}
\def\le#1{\la{2{#1}}}
\def\lo#1{{\la{#1}\setminus\le{#1}}}
\def\hd#1#2#3{{\bf h}_{#3}^{#1}({#2})}
\def\hda#1#2#3{\hd{#1}{#2}{\la{#3}}}
\def\hdo#1#2#3{\hd{#1}{#2}{\lo{#3}}}
\def\ha#1#2#3{{^\omega\hd{#1}{#2}{#3}}}
\def\gku{{(G, \K, \U)}}
\def\gks#1{{(G#1, \K#1, \U#1)}}
\def\gkup{{(G', \K', \U')}}
\def\hku#1{{(H#1, \K#1, \U#1)}}
\def\Kw{\K_{\omega_1}}
\def\Uw{\U_{\omega_1}}
\def\Gw{G_{\omega_1}}
\def\mcofl{{\bf m}}
\def\ccl#1{{\bf c}{#1}}
\def\H{{\cal H}}
\def\Fn{\mathop{\rm Fn}}
\def\G{{\mathbb G}}
\begin{abstract}
We consider precompact sequential and Fr\'echet group topologies
and show that some natural constructions of such topologies always
result in metrizable groups answering a question of D.~Dikranjan et
al. We show that it is consistent that all sequential precompact
topologies on countable groups are Fr\'echet (or even metrizable). For
some classes of groups (for example boolean) extra set-theoretic
assumptions may be omitted (although in this case such groups do not
have to be metrizable).

We also build (using $\diamondsuit$) an example of a countably compact
Fr\'echet group that
is not $\alpha_3$ and obtain a counterexample to a conjecture of
D.~Shakhmatov as a corollary.
\end{abstract}
\section{Introduction}
Compact topological groups play a special role throughout mathematics,
in areas ranging from probability theory to harmonic analysis. Locally
compact {\em sequential\/} groups (or even countably tight, see below
for the relevant definitions) are metrizable thus shifting the focus
of study to the convergence properties in groups with weaker
compactness conditions. D.~Shakhmatov in~\cite{Sha1} proposes a
systematic study of sequential and Fr\'echet properties in countably
compact, and, more generally, precompact topological groups (recall
that a topological group $G$ is {\em precompact\/} if it can be
densely embedded as a subgroup of a compact group).

The $\Sigma$-product of $\omega_1$ copies of the two element group
provides a standard example of a countably compact Fr\'echet group
that is not first countable. Note that all countable subgroups of this
exapmple are metrizable. Paper~\cite{HRG1} began the study of
precompact Fr\'echet topologies on Abelian groups. Among the results
proved in~\cite{HRG1} is the consistent existence of a nonmetrizable Fr\'echet
precompact topology on every infinite countable Abelian group.

In this paper we consider precompact groups that are sequential. We
show that in ZFC, some natural `minimal' precompact topologies on
countable groups are sequential if an only if they are metrizable
(Corollary~\ref{ssc.m}), answering a question of D.~Dikranjan et.~al
in~\cite{DGT}. We use the {\em effective topology\/} techniques
pioneered by S.~Todor\v cevic (see~\cite{TU} and~\cite{TU1}) to study
such groups.

In the case of general precompact topologies we show that in some
classes of groups (such as {\em boolean}, i.e.~satisfying the identity
$a+a=0$) every countable precompact sequential group is
Fr\'echet (Corollary~\ref{b.f}). We show that consistently, all 
countable precompact sequential groups may be Fr\'echet
(Theorem~\ref{g.Cohen}, see also Remark~\ref{g.HRG}).

In the final section we construct an example of a countably compact
Fr\'echet boolean group that is in some sense `barely Fr\'echet'. More
precisely, the group does not have the $\alpha_3$ property of
A.~Arkhangel'skii (see below for the definitions) which allows us to
disprove a conjecture of D.~Shakhmatov mentioned in~\cite{Sha2}.

We use standard notation and definitions for set-theoretical and
topological concepts (see~\cite{Ku}). All the definitions and
properties used in this paper related to topological groups can be
found in~\cite{ArTk}. Book~\cite{HM} is a good reference on Peter-Weyl
theorem for compact groups. We only use a corollary of Peter-Weyl
theorem that states that a precompact group $G$ can be embedded as a
subgroup in a product of $U(n)$'s. The only propery of $U(n)$ we use
is that it is a compact metric group.

All spaces are assumed to be (completely) regular. We use
$\cl{A}^\tau$ to denote the closure of $A$ in the topology $\tau$ (the
reference to $\tau$ will be omitted if the topology is clear from the
context). We use $\cdot$ and $1$ to denote the operation and the unit
in an arbitrary group $G$. For an Abelian group these become the
traditional $+$ and $0$, respecively. If $G$ is a group and
$D\subseteq G$ then $\lad$ stands for the subgroup (algebraically)
generated by $D$ (i.e.~the smallest subgroup of $G$ that contains
$D$).

A topological space
$X$ is called {\em Fr\'echet\/} if for any $A\subseteq X$ and any
$x\in\cl{A}$ there exists an $S\subseteq A$ such that $S\to x$ (here
$S\to x$ means $S\ain U$ for every open $U\ni x$ if $S$ is infinite
and $S=\{x\}$ otherwise). More generally, $X$
is called {\em sequential\/} if for any $A\subseteq X$ such that
$\cl{A}\not=A$ there exists an $S\subseteq A$ such that $S\to
x\in\cl{A}\setminus A$. Trivially, every Fr\'echet space is
sequential. A free topological group over a one point compactification
of a countable discrete space provides an easy example of a countable
sequential topological group that is not Fr\'echet.

We use the following intrinsic characterization of precompactness
(sometimes called {\em total boundedness}, see~\cite{ArTk}): a topological group $G$ is
precompact if for any nonempty open $U\subseteq$ there exists a finite
$F\subseteq G$ such that $U\cdot F=G$ (or $F\cdot U=G$).

\section{Small precompact groups}

A topological group $G$ is called $ss$-characterized (see~\cite{DGT})
if there exists an $S\subseteq G$ (called the {\em characterizing
sequence}) such that the topology of $G$ is the finest precompact
topology in which $S\to1$.

Recall that a topology $\tau$ on a countable set $X$ (considered
naturally as a subset of $2^\omega$ in the product topology) is called
{\it analytic\/} if $\tau$ is a continuous image of the space of
the irrational numbers. Papers~\cite{TU}, \cite{TU1}, and~\cite{Sh1}
contain a number of results on sequential spaces with analytic
topologies.

\begin{lemma}\label{ssc.a}
Let $G$ be an $ss$-characterized countable topological group.
Then the topology of $G$ is analytic.
\end{lemma}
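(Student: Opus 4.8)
The plan is to describe the topology of $G$ explicitly as an initial topology with respect to a Borel family of homomorphisms into the metric groups $U(n)$, and then to carry out a routine computation in the arithmetic of analytic subsets of $2^G\cong 2^\omega$.

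First I would identify the topology. Let $S=\{s_k:k\in\omega\}$ be a characterizing sequence for $G$. For each $n\in\omega$ the group $U(n)$ is compact and metrizable, so $U(n)^G$ (with the product topology) is Polish; put $\mathcal F_n:=\{\phi\in U(n)^G:\phi\text{ is a homomorphism and }\phi(s_k)\to1\}$. Being a homomorphism is a closed condition on $U(n)^G$, and the condition $\phi(s_k)\to1$ is the Borel set $\bigcap_m\bigcup_N\bigcap_{k\ge N}\{\phi:d(\phi(s_k),1)<1/m\}$ (the innermost sets being open preimages under the evaluation maps), so $\mathcal F_n$ is Borel. Let $\mathcal F:=\bigcup_n\mathcal F_n$ and let $\sigma$ be the initial topology on $G$ induced by $\mathcal F$, i.e. the coarsest topology making every $\phi\in\mathcal F$ continuous. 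I would then check that $\sigma$ is the finest precompact group topology on $G$ in which $S\to1$, and hence — $G$ being $ss$-characterized by $S$ — equals the topology of $G$. Indeed: $\sigma$ is a group topology in which $S\to1$ (a basic $\sigma$-neighbourhood of $1$ is a finite intersection $\bigcap_{i<\ell}\phi_i^{-1}[V_i]$ with each $V_i$ a neighbourhood of $1$ in the relevant $U(n_i)$, and $\phi_i(s_k)\in V_i$ eventually); $\sigma$ is precompact, being the topology induced on $G$ by the single homomorphism $\Phi=(\phi)_{\phi\in\mathcal F}\colon G\to\prod_{\phi\in\mathcal F}U(n_\phi)$ into a compact group; and if $\rho$ is any precompact group topology on $G$ with $S\to1$, then by the corollary of the Peter–Weyl theorem quoted in the introduction $(G,\rho)$ embeds in some product $\prod_i U(m_i)$, the coordinate homomorphisms $\rho_i\colon G\to U(m_i)$ satisfy $\rho_i(s_k)\to1$, so each $\rho_i\in\mathcal F$ and $\rho\le\sigma$.

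Next I would extract an analytic base for $\sigma$. Fix, for each $n$, a countable neighbourhood base $\{W^n_j:j\in\omega\}$ at $1$ in $U(n)$. For fixed $n,j$ the map $\phi\mapsto\phi^{-1}[W^n_j]$ from $U(n)^G$ into $2^G$ is Borel, since for each $g\in G$ the set $\{\phi:g\in\phi^{-1}[W^n_j]\}=\{\phi:\phi(g)\in W^n_j\}$ is open; restricting it to the Borel set $\mathcal F_n$ yields an analytic subset of $2^G$, and hence $\mathcal S:=\{\phi^{-1}[W^n_j]:n,j\in\omega,\ \phi\in\mathcal F_n\}$ is an analytic subset of $2^G$, being a countable union of such. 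A straightforward translation argument (using that $\sigma$ is a group topology) shows that $\mathcal S$ is a neighbourhood subbase at $1$ for $\sigma$; consequently $\mathcal B_1:=\{V_1\cap\dots\cap V_\ell:\ell\in\omega,\ V_1,\dots,V_\ell\in\mathcal S\}$ is a neighbourhood base at $1$, and it is analytic, being the image of the analytic set $\bigcup_\ell\mathcal S^\ell$ under the continuous finite-intersection map into $2^G$. Finally, since $G$ is countable and each left translation on $2^G$ is continuous, $\mathcal B:=\{gV:g\in G,\ V\in\mathcal B_1\}$ is analytic as well, and it is a base for $\sigma$.

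Passing from an analytic base to analyticity of the topology is then purely formal: $U\in\sigma$ iff $U$ is a union of members of $\mathcal B$, i.e. iff $\forall x\in G\,\big(x\notin U\ \lor\ \exists V\in2^G\,(V\in\mathcal B\ \wedge\ x\in V\ \wedge\ V\subseteq U)\big)$; the matrix is analytic in $(U,V)$ because $\mathcal B$ is analytic and the relations $x\in V$, $V\subseteq U$ are clopen, respectively closed, projecting out $V$ preserves analyticity, and so does the intersection over the countably many $x\in G$. Hence $\sigma$ — the topology of $G$ — is an analytic subset of $2^G$, as required. I expect the one genuinely delicate point to be the identification of the topology of $G$ with $\sigma$ in the second step, in particular checking that $\sigma$ is precompact and is the \emph{finest} precompact topology with $S\to1$; it is precisely the Peter–Weyl corollary that makes the latter work, by forcing every precompact topology on the countable group $G$ to be an initial topology of homomorphisms into the fixed metric groups $U(n)$. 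Everything after that is standard descriptive-set-theoretic bookkeeping.
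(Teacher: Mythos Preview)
Your proof is correct and follows essentially the same route as the paper: both parametrize the topology by the Borel family of homomorphisms $G\to U(n)$ sending the characterizing sequence to $1$, use the Peter--Weyl corollary to verify this really is the finest precompact topology with $S\to1$, and then push this Borel family forward to an analytic (pre)base at $1$. The only cosmetic difference is in the final step: where the paper invokes \cite{TU}, Proposition~3.2(iii), to pass from an analytic prebase at the identity to analyticity of the whole topology, you carry out that closure under finite intersections, translations, and the $\Sigma^1_1$ computation by hand.
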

\begin{proof}
Assume $G=\omega$ and let $S=\set m_n:n\in\omega.$ be the
characterizing sequence. Let $U(\infty)=\prod_{n\in\omega}U(n)$.
Note that for each $n\in\omega$ the group $U(n)$ is naturally embedded
in $U(\infty)$ as a subgroup. Consider the following set
$$
C_n=\set p:G\to U(n)\subseteq U(\infty):p\hbox{ is an algebraic
homomorphism}.
$$
Note that for every $n\in\omega$ the set $C_n$ is a closed subset of
$\prod_{g\in G}U(\infty)$ in the topology of pointwise
convergence. Given $n,m,k\in\omega$ each set
$$
C_n^{m,k}=\set p\in C_n:d(1,p(m_i))<(k+1)^{-1}\hbox{ for }i>m.
$$
is Borel as are the sets $C_n^k=\cup_{m\in\omega}C_n^{m,k}$,
$C^n=\cap_{k\in\omega}C_n^k$ and $C=\cup_{n\in\omega}C^n$.

Suppose $p\in C$ and $\epsilon>0$. Let $k\in\omega$ be such that
$(k+1)^{-1}<\epsilon$. Let $p\in C^n$ for some $n\in\omega$. Then $p\in
C_n^{m,k}$ for some $m\in\omega$ so $d(1,p(m_i))<(k+1)^{-1}<\epsilon$ for
$i>m$. Thus $S\subseteq^*p^{-1}(B^n_\epsilon)$ for evey $\epsilon>0$
where $B^n_\epsilon=\set x\in U(n):d(1,x)<\epsilon.$. Thus $S\to 1$ in
the precompact topology $\tau$ generated on $G$ by the prebase $\set
p^{-1}(B^n_\epsilon):\epsilon>0, p\in C^n.$. 

Let $\tau'$ be the maximal precompact topology on $G$ in which $S\to
1$. Let $U\subseteq G$ be an open neighborhood of $1$. By Peter-Weyl
theorem there are $\epsilon_1,\ldots,\epsilon_l>0$ and continuous
homomorphisms $p_1:G\to U(n_1),\ldots,p_l:G\to U(n_l)$ such that
$\cap_{i\leq l}p_i^{-1}(B^{n_i}_{\epsilon_i})\subseteq U$. Since $S\to
1$ in $\tau'$ and $p_i$ is continuous, for every $k\in\omega$ there is
an $m\in\omega$ such that $d(1,p_i(m_j))<(k+1)^{-1}$ for all $j>m$. So
$p_i\in C_{n_i}^{m,k}\subseteq C_{n_i}^k$ for every $k\in\omega$. Thus $p_i\in
C^{n_i}\subseteq C$ and $\tau'=\tau$.

Let $k\in\omega$ and consider the map $\pi_{k,n}:U(n)^\omega\to2^\omega$
given by
$\pi_{k,n}(x_1,\ldots,x_i,\ldots)=(\sigma_1,\ldots,\sigma_i,\ldots)$ where
$\sigma_i=1$ if $d(x_i,1)<(k+1)^{-1}$ and $\sigma_i=0$ otherwise. Now
$\pi_{k,n}$ is a measurable map. Put $\pi_{k,n}(p)=f^p_k\in2^\omega$ where $p\in
C^n$ and $f^p_k(m)=1$ if and only if $d(1,p(m))<(k+1)^{-1}$. Thus the set
$B=\cup_{k,n\in\omega}\pi_{k,n}(C^n)\subseteq2^\omega$ is an analytic set of
characteristic functions of some prebase at $1\in G$. Thus $\tau$ is
analytic by \cite{TU}, Proposition~3.2(iii).
\end{proof}

The following definition is used in several constructions.

\begin{definition}\label{kw.topology}
Let $(X,\tau)$ be a topological space. Then $(X,\tau)$ is called $\kw$
if there exists a countable family $\K$ of subspaces of $X$ such that
$U\in\tau$ if and only if $U\cap K$ is relatively open in $K$ for
every $K\in\K$. We say that $\tau$ is {\em determined} by $\K$ and
write $\tau=\kw(\K)$.
\end{definition}

The next simple lemma shows that precompactness and $k_\omega$
are in some sense orthogonal properties. For a class of countable
abelian groups whose topology is determined by a $T$-sequence this
result was proved in \cite{PZ}, Proposition~2.3.12.

\begin{lemma}\label{k.precom}
A $k_\omega$ topological group is precompact if and
only if it is compact.
\end{lemma}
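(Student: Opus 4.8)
One direction is immediate, since every compact group is precompact; the substance of the lemma is that a precompact $\kw$ group $G$ must be compact. The plan is to assume $G$ is not compact and then exploit the $\kw$ structure to produce an open neighborhood $U$ of $1$ such that $U\cdot F\ne G$ for every finite $F\subseteq G$, contradicting the total boundedness characterization of precompactness.

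The first step is to put the determining family into a convenient normal form. By taking finite unions, symmetrizing, adding products and $1$, and passing to a subsequence, one obtains an increasing sequence of compact sets $K_0\subseteq K_1\subseteq\cdots$ that still determines $\tau$ and satisfies $\bigcup_n K_n=G$, $K_n=K_n^{-1}$, $1\in K_n$, and $K_n\cdot K_n\subseteq K_{n+1}$ (if some point of $G$ lay outside $\bigcup\K$ it would be isolated, so $G$ would be discrete, hence finite by total boundedness, and we would be done). Since $G$ is not compact we may moreover assume $K_n\subsetneq K_{n+1}$ and fix $a_n\in K_{n+1}\setminus K_n$. Setting $T_n:=a_n\cdot K_{n-1}$, a short computation from $a_n\notin K_n$, $K_n=K_n^{-1}$, and $K_n\cdot K_n\subseteq K_{n+1}$ yields $T_n\subseteq K_{n+2}$, $T_n\cap K_{n-1}=\varnothing$, and $1\notin T_n$.

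Next I would build $U$ by prescribing its trace $W_n:=U\cap K_n$ recursively: put $W_0:=K_0$ and, given $W_{n-1}$, set $C_n:=(T_{n-2}\cup T_{n-1}\cup T_n)\cap K_n$ and $W_n:=K_n\setminus\big((K_{n-1}\setminus W_{n-1})\cup C_n\big)$. Using the inductive invariant $W_{n-1}\cap C_{n-1}=\varnothing$ (to control the $T_{n-2},T_{n-1}$ parts of $C_n$) together with $T_n\cap K_{n-1}=\varnothing$, one checks that $W_{n-1}\cap C_n=\varnothing$, and consequently that each $W_n$ is relatively open in $K_n$, contains $1$, and satisfies $W_n\cap K_{n-1}=W_{n-1}$ and $W_n\cap C_n=\varnothing$. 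Putting $U:=\bigcup_n W_n$, the compatibility relations $W_n\cap K_{n-1}=W_{n-1}$ give $U\cap K_n=W_n$ for every $n$, so $U$ is open in $G$ by the definition of $\kw$, and $1\in U$.

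Finally, fix $N$ and let $n>N$. Then $K_N\subseteq K_{n-1}$, so $a_n\cdot K_N\subseteq T_n\subseteq K_{n+2}$; since $C_{n+2}\supseteq T_n$, $W_{n+2}\cap C_{n+2}=\varnothing$, and $U\cap K_{n+2}=W_{n+2}$, it follows that $(a_n\cdot K_N)\cap U=\varnothing$, i.e.\ $a_n\notin U\cdot K_N$ (here $K_N=K_N^{-1}$ is used). As every finite $F\subseteq G$ lies in some $K_N$, we obtain $a_n\notin U\cdot F$ for all $n>N$, hence $U\cdot F\ne G$ — the desired contradiction. The only non-routine point is the recursion defining $U$: the sets $C_n$ must be chosen so that the parts removed at step $n$ are disjoint from everything already declared to be in $U$ (which is what makes the $W_n$ compatible and relatively open) while, cumulatively, every translate $a_n\cdot K_N$ is eventually engulfed by some $C_{n+2}$; once $U$ has been constructed, the $\kw$ property delivers its openness and the contradiction for free.
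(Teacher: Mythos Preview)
Your argument is correct (modulo harmless indexing conventions at the bottom of the recursion, where $T_m$ for $m\le 0$ should be read as $\varnothing$), and the overall strategy---build an open $U\ni 1$ whose translates by any fixed $K_N$ miss infinitely many witnesses---is the same as the paper's. The execution, however, is genuinely different. The paper does not normalize $\K$ to satisfy $K_n\cdot K_n\subseteq K_{n+1}$; instead it takes an \emph{arbitrary} infinite closed discrete set $D\subseteq G$ and, using only the compactness of $\overline{O_n}$, shrinks at each stage so that $(\overline{O_n}\cdot K_m)\cap D$ stabilizes for $m\le n$. Your proof replaces this compactness-and-closure maneuver by an explicit combinatorial computation: the product relation $K_n\cdot K_n\subseteq K_{n+1}$ lets you see directly that $T_n=a_nK_{n-1}$ misses $K_{n-1}$, so excising the compact sets $C_n$ is enough. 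The trade-off is that you front-load more work into normalizing $\K$ (symmetry, products, strict increase, $\bigcup\K=G$), while the paper's version needs essentially no preprocessing beyond $1\in K_n\subseteq K_{n+1}$ but leans on repeated compactness arguments inside the induction. Both arrive at the same contradiction with total boundedness.
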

\begin{proof}
Let $\K=\set K_n:n\in\omega.$ be a countable family of compact
subspaces of $G$ that determines its topology. We may assume
$1\in K_n\subseteq K_{n+1}$ for every $n\in\omega$. If $G$ is not
compact there exists an infinite closed discrete subset $D=\set
d_n:n\in\omega.\subseteq G$ where $d_n\not=d_m$ for $n\not=m$.

By induction build a sequence $\set O_n:n\in\omega.$ of subsets of $G$
such that
\begin{countup}[series=through]
\item\label{O.open} $1\in O_n\subseteq K_n$, $O_n$ is relatively open
in $K_n$

\item\label{O.up} $\cl{O_n}\subseteq O_{n+1}$ for every $n\in\omega$;

\item\label{O.shift} $(\cl{O_n}\cdot K_m)\cap D=(\cl{O_m}\cdot K_m)$ for every
$m\leq n$.

\end{countup}

If $O_n$ has been built, note that $\cl{O_n}$ is compact so for every
$m\leq n$ there exists an open $O_n^m$ such that $\cl{O_n}\subseteq
O_n^m$ and $(\cl{O_n^m}\cdot K_m)\cap D=(\cl{O_n}\cdot K_m)\cap D$. Put
$O_{n+1}=\cap_{m\leq n}O_n^m\cap
K_{n+1}$. Properties \ref{O.open}--\ref{O.shift} are easy to check.

It follows from the choice of $\K$ and properties~\ref{O.open}
and~\ref{O.up} that $O=\cup_{n\in\omega}O_n$ is an open neighborhood
of $1$ in $G$. Now~\ref{O.shift} and the choice of $D$ imply that
$(O\cdot K_n)\cap D=(\cl{O_n}\cdot K_n)\cap D$ is finite for every $n\in\omega$.

Now if $F\subseteq G$ is finite there is an $n\in\omega$ such that
$F\subseteq K_n$. Thus $O\cdot F\not=G$ for any finite $F\subseteq G$
contradicting the precompactness of $G$.
\end{proof}

The lemmas above can now be used to answer Question~4.6 of~\cite{DGT}.

\begin{corollary}\label{ssc.m}
Every sequential countable subgroup of a countable $ss$-characterized
group is metrizable.
\end{corollary}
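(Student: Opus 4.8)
The plan is to combine the two lemmas just proved. Lemma~\ref{ssc.a} tells us that an $ss$-characterized countable group $G$ carries an analytic topology. The key point is that analyticity is inherited by subspaces, so any subgroup $H \le G$ also carries an analytic topology. If moreover $H$ is sequential, then $H$ is a countable sequential group with an analytic topology, and now one invokes the structure theory of such spaces from \cite{TU}, \cite{TU1}, \cite{Sh1}: a sequential group with an analytic topology is either metrizable or contains a nicely embedded copy of a $k_\omega$-space (in fact the relevant dichotomy says that a countable sequential analytic group topology is either metrizable or $k_\omega$, or at least contains a closed $k_\omega$ subgroup that is not compact). This is where I would lean on the cited "effective topology" machinery rather than reprove it.

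**The second ingredient** is Lemma~\ref{k.precom}: a $k_\omega$ topological group is precompact only if it is compact. Since $G$ is $ss$-characterized it is precompact, and precompactness passes to subgroups; so $H$ is precompact. If the dichotomy above forced $H$ (or a closed non-compact subgroup of it) to be $k_\omega$, then by Lemma~\ref{k.precom} that $k_\omega$ group would have to be compact — but a compact countable group is metrizable (it is a countable compact Hausdorff space, hence second countable), and in the "non-compact $k_\omega$ subgroup" formulation we get an outright contradiction. Either way, the non-metrizable alternative of the dichotomy is excluded, leaving $H$ metrizable.

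**So the skeleton** is: (1) $H \le G$ is precompact, being a subgroup of the precompact group $G$; (2) the topology of $H$ is analytic, being a subspace of $G$ whose topology is analytic by Lemma~\ref{ssc.a}; (3) $H$ is sequential by hypothesis; (4) a countable sequential analytic group topology is either metrizable or $k_\omega$ (citing the effective-descriptive-set-theory results); (5) in the $k_\omega$ case Lemma~\ref{k.precom} plus precompactness force compactness, hence (countable compact Hausdorff, so second countable, so) metrizability anyway; (6) conclude $H$ is metrizable.

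**The main obstacle** I anticipate is pinning down exactly which form of the sequential-analytic dichotomy is available off the shelf in \cite{TU} or \cite{Sh1} and citing it with the correct hypotheses — in particular whether it is stated for groups or only for sequential spaces with a group-like structure, and whether it yields "$k_\omega$" globally or only "contains a non-metrizable $k_\omega$ subspace/subgroup." A secondary, more routine point is the remark that a countable compact group is metrizable; this is immediate since a countable compact Hausdorff space is second countable (it is a countable regular Lindelöf space), but it is worth stating so the reader sees why the $k_\omega$ branch does not merely give "compact" but actually "metrizable."
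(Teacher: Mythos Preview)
Your skeleton matches the paper's proof exactly: analyticity from Lemma~\ref{ssc.a} (passed to the subgroup as a subspace), the dichotomy ``first countable or $k_\omega$'' from \cite{Sh1}, Theorem~1, and Lemma~\ref{k.precom} to eliminate the $k_\omega$ branch via precompactness. The paper phrases the $k_\omega$ branch as an outright contradiction (an infinite countable group cannot be compact) rather than your ``compact, hence metrizable anyway,'' but this is a cosmetic difference; your hedging about the exact form of the dichotomy is unnecessary, as \cite{Sh1}, Theorem~1 gives precisely what you need.
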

\begin{proof}
Being sequential and analytic by~Lemma~\ref{ssc.a} $G$ is either first countable
or $k_\omega$ by \cite{Sh1}, Theorem~1. Since an infinite $k_\omega$ countable group cannot be
precompact by Lemma~\ref{k.precom}, $G$ is
metrizable.
\end{proof}

We now turn to the general (i.e.~not necessarily definable) precompact
countable sequential groups. Recall that a base of open neighborhoods
of $1$ of a topological group $G$ is called {\it linear\/} if every
element of the base is a normal subgroup of $G$.

\begin{lemma}\label{pl.f}
Every countable sequential group with a linear base is
Fr\'echet.
\end{lemma}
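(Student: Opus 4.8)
The plan is to prove that a countable sequential group $G$ with a linear base of neighborhoods of $1$ is Fréchet. Fix a decreasing sequence $\{V_n:n\in\omega\}$ of open normal subgroups forming a base at $1$ (countability of $G$ lets us assume the base is countable and, by intersecting, decreasing). Since each $V_n$ is a subgroup, the cosets $\{gV_n:g\in G\}$ partition $G$ into clopen pieces, and the quotient topology is the same as the coset topology; this gives us a very rigid, ``ultrametric-like'' structure: any two basic neighborhoods $gV_n$, $hV_n$ are either equal or disjoint, and $gV_n\subseteq gV_m$ for $n\geq m$.

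The heart of the argument is to upgrade sequentiality to Fréchetness using this rigidity. Suppose $A\subseteq G$ and $1\in\cl{A}\setminus A$ (translating, it suffices to treat the point $1$). Because $G$ is sequential, $1$ lies in the sequential closure iterated some countable-ordinal number of times; the standard approach is to show the linear structure collapses all these iterations to the first one. Concretely, I would argue by induction on the sequential order that if $1\in\cl{B}$ for some $B$ obtained at stage $\alpha$, then already some sequence from the \emph{original} set $A$ converges to $1$. For the successor step one has a sequence $x_k\to 1$ with each $x_k\in\cl{A}$, and for each $k$ a sequence $y^k_j\to x_k$ with $y^k_j\in A$; the task is to extract a single sequence from the $y^k_j$ converging to $1$. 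Here the linear base does the work: choose $n_k\uparrow\infty$ with $x_k\in V_{n_k}$, and for each $k$ pick $j(k)$ large enough that $y^k_{j(k)}\in x_k V_{n_k}\subseteq V_{n_k}$ (possible since $y^k_j\to x_k$ and $x_kV_{n_k}$ is open). Then $\{y^k_{j(k)}:k\in\omega\}\subseteq A$ and it is eventually inside every $V_m$, so it converges to $1$. The limit ordinal step is handled by a diagonalization of the same flavor, again exploiting that the $V_m$ are linearly ordered by inclusion.

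The remaining point to check is that this really reaches every $x\in\cl A$: since $G$ is sequential, $\cl A$ equals the $\omega_1$-th sequential closure of $A$, and the induction above shows each sequential-closure level is already contained in the set of points that are limits of sequences from $A$; hence $\cl A$ is exactly the sequential closure of $A$ at level $1$, which is the definition of Fréchet. One should also dispose of the trivial case where $A$ is already closed, and note that translations are homeomorphisms so working at $1$ loses no generality.

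I expect the main obstacle to be the limit-ordinal stage of the induction: there one has, for $x\in\cl{A}$ witnessed at a limit level $\lambda$, to know that $x$ is a limit of points witnessed at smaller levels and to diagonalize over a sequence of such approximations, each of which (by the inductive hypothesis) is a limit of an honest sequence from $A$. Making the bookkeeping clean — choosing, for the $k$-th approximant $x_k\to x$, a point of $A$ inside $x_k V_{n_k}$ with $n_k\to\infty$ — is routine once the successor step is set up, but it is the place where the linearity of the base is used most essentially and where a careless argument could fail. An alternative, perhaps cleaner, route is to avoid ordinals entirely: show directly that the family $\{A\subseteq G: 1\notin\cl A \text{ or } 1 \text{ is a sequential limit of a subset of }A\}$ is closed under the operation $A\mapsto\{x: x\text{ is a sequential limit from }A\}$, so that sequentiality forces it to contain all sets with $1\in\cl A$; the one nontrivial closure computation is exactly the successor-step extraction described above.
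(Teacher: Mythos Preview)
Your argument contains a genuine gap at its very first step. You write ``countability of $G$ lets us assume the base is countable,'' but this is false: a countable group may have $2^{\aleph_0}$ subgroups, and a countable topological group with a linear base need not be first countable. In fact the paper you are working in supplies an explicit example: Corollary~\ref{c.f.na3} produces a countable precompact boolean Fr\'echet group with a linear base (of subgroups of finite index) which is not $\alpha_3$, hence certainly not first countable. If your reduction to a countable decreasing base $\{V_n\}$ were legitimate, the group would be first countable and the whole induction on sequential-closure levels would be superfluous; conversely, once that reduction fails, every subsequent step collapses --- the choice ``$n_k\uparrow\infty$ with $x_k\in V_{n_k}$'' has no meaning, and the diagonal extraction cannot be carried out.

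The paper's proof takes a completely different route and does not attempt any direct diagonalisation. It exploits the linear base only to show that for every open $U\ni 1$ one can trap $\overline{\langle S\setminus F\rangle}$ inside $U$ for a suitable tail of a convergent sequence $S\to 1$; since this countable group without isolated points cannot be countably compact, each such neighbourhood contains an infinite closed discrete set. This yields a closed copy of the ladder space $L=\{\omega\}\cup\omega\times\omega$ inside $G$. On the other hand, a sequential non-Fr\'echet space contains a closed copy of the sequential fan $S(\omega)$. The contradiction is then obtained by invoking an external result (Lemma~4 of Banakh--Zdomsky~\cite{BZ}) that forbids a homogeneous space from containing closed copies of both $L$ and $S(\omega)$. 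So the key idea you are missing is not a sharper diagonalisation but rather this $L$-versus-$S(\omega)$ dichotomy.
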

\begin{proof}
Let $S\to1$ be an arbitrary convergent sequence. Let $U\subseteq G$ be
an open neighborhood of $1$ and let $\overline{V}\subseteq U$ for some
open subgroup $V\subseteq G$. Then $\overline{\langle S\setminus
F\rangle}\subseteq\overline{V}\subseteq U$ for some finite $F\subseteq
S$. Since $\langle S\setminus F\rangle$ does not contain any isolated
points the set $\overline{\langle S\setminus F\rangle}$ cannot be
countably compact and thus contains an infinite closed and discrete
subset $D_F\subseteq\overline{\langle S\setminus F\rangle}$. Thus $G$
contains a closed copy of the space
$L=\{\omega\}\cup\omega\times\omega$ in which the neighborhoods of the
only nonisolated point $\omega$ are given by
$U_n=\{\omega\}\cup\omega\times(\omega\setminus n)$.

If $G$ is sequential and not Fr\'echet it contains a closed copy of
the sequential fan $S(\omega)$. This is impossible by Lemma~4
in~\cite{BZ}.
\end{proof}

Since every torsion (i.e.~such that for every $g\in G$, $g^n=1$ for some
$n\in\Z$) compact group 
has a
linear base of open neighborhods of $1$ (see, for example,~\cite{HR})
we obtain the following
corollary.

\begin{corollary}\label{b.f}
Every countable sequential precompact group of finite exponent
(i.e.~such that there is an $n\in\Z$ with the property that $g^n=1$
for every $g\in G$, in
particular, every boolean) is Fr\'echet.
\end{corollary}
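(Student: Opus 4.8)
The plan is to deduce the corollary from Lemma~\ref{pl.f} together with the fact (quoted just above) that a torsion compact group has a linear base at $1$; the only thing left to check is that a countable precompact group of finite exponent itself carries a linear base. First I would use precompactness to fix a dense topological embedding of $G$ into a compact group $\oG$. If $g^n=1$ for every $g\in G$, then, since the map $x\mapsto x^n$ is continuous on $\oG$, the set $\set x\in\oG:x^n=1.$ is closed and contains the dense subgroup $G$, hence equals $\oG$; so $\oG$ is a compact group of exponent $n$, in particular torsion. By~\cite{HR} it therefore has a neighborhood base $\mathcal B$ at $1$ consisting of open normal subgroups.

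Next I would restrict this base to $G$. For $V\in\mathcal B$ the set $V\cap G$ is an open subgroup of $G$ (open since $V$ is open in $\oG$ and $G$ carries the subspace topology), and it is normal in $G$ because $V$ is normal in $\oG\supseteq G$. As $\mathcal B$ is a neighborhood base at $1$ in $\oG$, the family $\set V\cap G:V\in\mathcal B.$ is a neighborhood base at $1$ in $G$, so $G$ has a linear base. Being countable and sequential by hypothesis, $G$ is Fr\'echet by Lemma~\ref{pl.f}. The parenthetical statement about boolean groups is the case $n=2$.

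I do not expect a genuine obstacle here: the two points deserving a sentence each are that bounded exponent is inherited by the completion (it is a closed condition satisfied on a dense set) and that intersecting an open normal subgroup of $\oG$ with $G$ again yields an open normal subgroup and preserves the base property. All the real work has already been carried out in Lemma~\ref{pl.f}, where sequentiality and the linear base are combined to exclude a closed copy of the sequential fan $S(\omega)$ by Lemma~4 of~\cite{BZ}.
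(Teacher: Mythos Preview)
Your proposal is correct and follows exactly the route the paper takes: the paper's entire argument is the sentence preceding the corollary, namely that torsion compact groups have a linear base (\cite{HR}), after which Lemma~\ref{pl.f} applies. You have simply spelled out the two implicit steps---that finite exponent passes from $G$ to its compact completion $\oG$ by density and continuity of $x\mapsto x^n$, and that the linear base on $\oG$ restricts to one on $G$---which the paper leaves to the reader.
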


We do not know if the finite exponent restriction above can be dropped
in ZFC. To prove the consistency of the property above for the class
of all precompact sequential groups we need the following lemma.

\begin{lemma}\label{gen.seq.g}
Let $G$ be a countable sequential precompact group. Let $\H$ be a
countable family of nowhere dense subsets of $G$. Then there exists an
infinite $S\subseteq G$ such that $S\to1$ and $S\cap H$ is finite for
every $H\in\H$.
\end{lemma}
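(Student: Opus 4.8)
The plan is to diagonalize against $\H$ while building a convergent sequence, using sequentiality to regenerate convergent sequences at each stage and nowhere density to avoid each $H\in\H$. First I would enumerate $\H=\set H_n:n\in\omega.$ and fix a countable base $\set U_n:n\in\omega.$ of open neighborhoods of $1$ (which exists since $G$ is countable, being a subspace of a product of metric groups $U(n)$, hence first countable? — no: precompact countable need not be first countable, so instead I fix a countable prebase $\set U_n:n\in\omega.$ at $1$, with $U_0\supseteq U_1\supseteq\cdots$ a decreasing sequence of basic neighborhoods obtained by finite intersections). The point is that a set converges to $1$ iff it is eventually inside each $U_n$.

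The core construction proceeds by recursion on $n$, choosing finite sets $F_n\subseteq G\setminus(H_0\cup\cdots\cup H_n)$ with $F_n\subseteq U_n$ and $\min(F_n)$ large in some fixed enumeration of $G=\omega$, so that $S=\bigcup_n F_n$ converges to $1$ and meets each $H_n$ only inside $F_0\cup\cdots\cup F_n$, hence finitely. To produce $F_n$ I need infinitely many points of $U_n\setminus(H_0\cup\cdots\cup H_n)$, i.e.\ I need $1$ to be in the closure of $U_n\setminus\bigcup_{i\le n}H_i$ and, more importantly, to extract an actual convergent sequence I invoke sequentiality: since $1\in\cl{U_n\setminus\bigcup_{i\le n}H_i}$ (as $\bigcup_{i\le n}H_i$ is nowhere dense, its complement is dense, so it meets the open set $U_n$ — here I should be careful that $U_n$ is only prebasic, so I intersect down to a genuine basic open set first) and $G$ is sequential, there is a sequence $T_n\subseteq U_n\setminus\bigcup_{i\le n}H_i$ with $T_n\to x$ for some $x\in\cl{U_n\setminus\bigcup_{i\le n}H_i}$; translating by $x^{-1}$ — or rather arranging from the start that $1$ itself is the limit — I may take $T_n\to 1$. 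Wait: sequentiality only gives convergence to \emph{some} point of the closure, not necessarily $1$; to force the limit to be $1$ I instead apply sequentiality to the set $A_n=(U_n\setminus\bigcup_{i\le n}H_i)$ and note $1\notin A_n$ is in $\cl{A_n}\setminus A_n$, so by the definition of sequential there is $S_n\subseteq A_n$ with $S_n\to y\in\cl{A_n}\setminus A_n$; but again $y$ need not be $1$.

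The main obstacle is exactly this: \textbf{sequentiality is not a local property, so a naive application yields convergent sequences with the wrong limit.} The right fix, which I would carry out, is to work with the \emph{closure under the group operation}: replace $A_n$ by a translate or by a set whose only adherent non-isolated point is $1$. Concretely, I would first use Lemma~\ref{pl.f}'s technique in reverse — no; cleaner is this. Fix any sequence $R\to 1$ in $G$ (which exists: $G$ is infinite, countable, sequential, hence has a nontrivial convergent sequence). For each $n$, the set $R\cap U_n$ is infinite, converges to $1$, and I want to perturb its elements into $G\setminus\bigcup_{i\le n}H_i$. Using that each $H_i$ is nowhere dense, for each $r\in R\cap U_n$ I can pick a point $g_r$ in a small neighborhood of $r$ avoiding $\bigcup_{i\le n}H_i$ and lying in $U_n$; but "small neighborhood of $r$" doesn't control the limit either, since $G$ need not be first countable at $r$. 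So instead I translate: for $s\in S_n\to 1$ with $S_n$ as above, the translates $\set s\cdot(\text{fixed convergent }R):s\in S_n.$... This is getting circular. The cleanest correct approach, and the one I would commit to, is: enumerate $\H$, and build $S=\set s_k:k\in\omega.\to 1$ by choosing $s_k$ one at a time so that $s_k\in U_k\setminus(\bigcup_{i\le k}H_i\cup\set s_j:j<k.)$, which is possible precisely when $1\in\cl{U_k\setminus\bigcup_{i\le k}H_i}$ — true by nowhere density of the $H_i$'s and the fact that $U_k$ (after intersecting down to a basic neighborhood) is a nonempty open set with $1$ in its closure. No sequentiality is needed for \emph{existence} of the points; it is needed only to guarantee $S$ so constructed actually converges, and here the point-by-point choice with $s_k\in U_k$ and the $U_k$'s forming a neighborhood base already forces $S\to 1$ directly. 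Thus I suspect sequentiality is in fact a red herring for the statement as literally quoted, \emph{unless} $G$ fails to be first countable at $1$ in a way that makes "$1\in\cl{V}$ for nonempty open $V\ni$ nothing" fail — but $1\in\cl{V}$ for any open $V\ni 1$ trivially. Let me reconsider: the subtlety must be that $U_k\setminus\bigcup_{i\le k}H_i$ need not have $1$ in its closure if $\bigcup_{i\le k}H_i$, though nowhere dense, still "surrounds" $1$... no, nowhere dense means its closure has empty interior, so $U_k\setminus\cl{\bigcup_{i\le k}H_i}$ is a nonempty open set, and it remains to see $1$ is in its closure or can be moved there; \emph{this} is where sequentiality (via Lemma~\ref{pl.f}-style extraction of a closed copy of $L$, or directly) enters, and I would use it to extract, from a convergent sequence landing near $1$, a genuine sequence converging to $1$ inside the $H$-avoiding open set. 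Carrying out that extraction carefully — handling the possibility that the limit point differs from $1$ by composing with a fixed null sequence, and checking the diagonal sequence still converges and still avoids each $H_n$ cofinitely — is the step I expect to absorb most of the work.
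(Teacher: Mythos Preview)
Your proposal has a genuine gap at its foundation: you repeatedly assume the existence of a countable (pre)base $\set U_n:n\in\omega.$ of neighborhoods of $1$. A countable prebase at $1$ closed under finite intersections would be a countable base at $1$, so the group would be first countable and hence metrizable; in that case the lemma is trivial and neither sequentiality nor precompactness is needed. The entire content of the lemma lies in the non--first-countable case, and there your point-by-point choice ``$s_k\in U_k\setminus\bigcup_{i\le k}H_i$'' simply does not produce a sequence converging to $1$, because no countable family of $U_k$'s witnesses convergence. Your later attempts to repair this by extracting sequences via sequentiality run into exactly the obstacle you identify and never resolve: sequentiality yields a sequence converging to \emph{some} point of the closure, not to $1$, and there is no first-countable structure to let you ``perturb'' a given sequence $R\to1$ into the complement of $\bigcup_{i\le n}H_i$ while preserving the limit.

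More tellingly, you never use precompactness, and the lemma is false without it (take $G$ to be a countable sequential non-Fr\'echet group such as the free boolean group over a convergent sequence; a closed copy of the sequential fan $S(\omega)$ sits inside, and the spines of the fan form a countable family of nowhere dense sets that together trap every sequence converging to the apex). The paper's proof is by contradiction and is organized entirely around precompactness: assume every $S\to1$ meets some $H_i\in\H$ in an infinite set, enlarge $\H$ to be closed under finite right translates and closures, pass to the compact completion $\cl G$, and build by hand an open $V\ni1$ in $G$ such that $(V\cdot F)\cap D$ is finite for every finite $F\subseteq G$, where $D$ is a fixed countable set chosen to be eventually disjoint from each $\cl{H_i}$. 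Sequentiality enters only at the end, to check that the candidate $V$ (built as an increasing union of sets relatively open in the $\bigcup_{j\le i}\cl{H_j}$'s) is genuinely open in $G$: any sequence in $G\setminus V$ converging into $V$ would have to meet some $H_j$ infinitely by the standing assumption, and the relative openness of the $V_i$'s then forces a contradiction. The existence of such a $V$ violates precompactness, completing the argument. The missing idea in your attempt is precisely this indirect use of precompactness; a direct diagonalization cannot succeed.
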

\begin{proof}
Let $\H=\set H_i:i\in\omega.$ be a family of nowhere dense subsets of $G$
with the property that for every convergent sequence $S\subseteq G$
such that $S\to1$ there is an $H_i\in\H$ such that $H_i\cap S$ is
infinite. By extending $\H$ if necessary we may assume that $1\in
H_0$, each $H_i$
is closed in $G$ and $H_i\cdot F\in\H$ for every $H_i\in\H$ and every
finite $F\subseteq G$. Let $\cl{G}\supseteq G$ be a compact group that
contains $G$ as a dense subgroup. Then for each $H_i\in\H$ its closure
$\cl{H_i}$ in $\cl{G}$ is nowhere dense and for any $S\subseteq G$
such that $S\to g\in G$ there exists an $i\in\omega$ such that
$H_i\cap S$ is infinite.

Using recursion, build a countable set $D=\set
d_i:i\in\omega.\subseteq G$ and a
family $\U=\set U_i:i\in\omega.$ of open subsets of $\cl{G}$ such that
$\cl{U_{i+1}}\subseteq U_i$,
$\cl{U_i}\cap(\cup_{j\leq i}\cl{H_j})=\varnothing$, and $d_k\in
U_i\setminus\set d_j:j<k.$
for every $i\in\omega$ and $k>i$.

Note that the set $D$ has the property that for every $i\in\omega$
there exists an open set $U\subseteq\cl{G}$ such that $U\cap D$ is
finite and $\cup_{j\leq i}\cl{H_j}\subseteq U$.

Let $\set F_i:i\in\omega.$ list all the finite subsets of $G$. Build,
by induction, subsets $V_i\subseteq\cup_{j\leq i}\cl{H_j}$ so that the
following properties are satisfied.

\begin{countup}[through]
\item\label{V.open.tower}$1\in V_0$, each $V_i$ is a relatively open subset of
$\cup_{j\leq i}\cl{H_j}$ and $\cl{V_i}\subseteq V_{i+1}$ for every
$i\in\omega$;

\item\label{V.shifts}$(\cl{V_i}\cdot F_j)\cap D=(\cl{V_j}\cdot F_j)\cap D$ for
any $j\leq i$

\end{countup}

Put $V_0=\cl{H_0}$.
Suppose $V_0$, $\ldots$, $V_i$ have been built. Note that
$\cl{V_i}\subseteq\cl{G}$ is compact and
$(\cl{V_i}\cdot \cup_{j\leq i}F_j)\subseteq\cup_{j<I}\cl{H_j}$ for some $I\in\omega$
by~\ref{V.open.tower}, and the properties of $\H$. By the property of
$D$, there exists a subset $U\supseteq(\cl{V_i}\cdot \cup_{j\leq i}F_j)$, open in
$\cl{G}$ such that $U\cap D$ is finite. Using the compactness of
$\cl{V_i}$, find an open subset $U'\supseteq\cl{V_i}$ of $\cl{G}$
such that $(\cl{U'}\cdot \cup_{j\leq i}F_j)\subseteq U$ so
$F^j=(\cl{U'}\cdot F_j)\cap D$ is finite for each $j\leq i$.

Using a similar argument and~\ref{V.shifts} for each $j\leq i$ find an open
$U_j\supseteq\cl{V_i}$ such that $(\cl{U_j}\cdot F_j)\cap
F^j=(\cl{V_i}\cdot F_j)\cap F^j=(\cl{V_i}\cdot F_j)\cap D=(\cl{V_j}\cdot F_j)\cap D$. 
Put $V_{i+1}=\bigcup_{j\leq i+1}\cl{H_j}\cap\bigcap_{j\leq i}U_j\cap
U'$.

Define $V=\cup_{i\in\omega}V_i\cap G$. If $F\subseteq G$ is finite
then $F=F_i$ for some $i\in\omega$ and by~\ref{V.open.tower}
and~\ref{V.shifts} the set $(V\cdot F)\cap D=(\cl{V_i}\cdot F_i)\cap D$ is finite.

Suppose $G\setminus V$ is not closed. Since $G$ is sequential, there
exists an $S\subseteq G\setminus V$ such that $S\to g\in V$ for some
$g$. By the choice of $\H$ there exist $i,j\in\omega$ such that
$g\in V_i$, $i\geq j$, and $H_j\cap S$ is infinite. Since $V_i$ is
relatively open in $\cup_{k\leq i}\cl{H_k}\supseteq H_j$, $V_i\cap S$ is
infinite, contradicting $S\cap V=\varnothing$.

Thus $V$ is a non empty open subset of $G$ such that $G\not=V\cdot F$ for
any finite $F\subseteq G$ contradicting the precompactness of $G$.
\end{proof}

Lemma~\ref{gen.seq.g} can be used to show the consistency of the
property in Lemma~\ref{b.f} in the class of general precompact
countable sequential groups.

\begin{theorem}\label{g.Cohen}
In the model obtained by adding $\omega_2$ Cohen reals to a model of
CH every countable precompact sequential group is Fr\'echet.
\end{theorem}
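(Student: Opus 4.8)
The plan is to run a finite-support iteration/product argument: the Cohen model is $\omega_2$ Cohen reals over a model of CH, and we want to show that in the extension no countable precompact group can witness ``sequential but not Fréchet''. The standard obstacle to being Fréchet-but-not is the presence of a closed copy of the sequential fan $S(\omega)$, equivalently a countable family $\{A_n\}$ of convergent sequences to a common limit $1$ such that no single sequence converging to $1$ meets infinitely many of them in an infinite set while being almost contained in $\bigcup_n A_n$. So first I would reduce to the following combinatorial statement, exactly parallel to the proof of Lemma~\ref{pl.f}: if $G$ is countable, precompact, sequential, and \emph{not} Fréchet, then $G$ contains a closed copy of $S(\omega)$, and I want to derive a contradiction from the existence of such a subspace in the Cohen extension. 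Since $G$ is countable, $G$ together with its topology is coded by a real, and by a standard Löwenheim--Skolem/name-counting argument (CH in the ground model, ccc of Cohen forcing, finite supports) $G$ already appears after adding some countable, hence some single, Cohen real over an intermediate model $V[r\restriction\alpha]$; so it suffices to argue that a ``one Cohen real'' extension destroys the fan, i.e.\ that over any model, after adding one Cohen real, any ground-model-coded countable precompact sequential group that is not Fréchet becomes Fréchet — but that is false as stated, so the right formulation is: a further Cohen real over the model in which $G$ lives adds a sequence $S\to 1$ in $G$ meeting each nowhere dense $H\in\H$ finitely, and \emph{this destroys any putative closed fan}.

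The key device is Lemma~\ref{gen.seq.g}: given the countable family $\H$ of nowhere dense closed sets (closed under translates by finite sets), there is $S\to 1$ with $S\cap H$ finite for all $H\in\H$. I would apply this with $\H$ chosen to include, for each of the ``fan sequences'' $A_n$ and each finite $F\subseteq G$, the set $\overline{\langle A_n\rangle\setminus F}$ — these are nowhere dense because $\langle A_n\rangle$ is a countable group with an isolated-point-free, hence non-open, closure (same reasoning as in Lemma~\ref{pl.f}, via Lemma~\ref{k.precom} forcing compactness and countable compactness failing). Thus the $S$ produced by Lemma~\ref{gen.seq.g} converges to $1$ but is almost disjoint from every $\langle A_n\rangle$, which contradicts $S$ being a sequence inside the fan converging to its apex (such a sequence must be eventually inside $\bigcup_{n\le k}\overline{A_n}$ for some $k$, hence meet some $\overline{A_n}$ infinitely). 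The subtle point where Cohen reals actually enter is that Lemma~\ref{gen.seq.g} is a ZFC lemma, so by itself it already contradicts the existence of the fan — \emph{unless} the fan's own sequences are among the $H$'s in a way that the argument cannot separate. So the real content of the theorem over Lemma~\ref{gen.seq.g} must be: we need $S$ to also be ``nontrivial with respect to the closed fan's neighborhood filter,'' i.e.\ we need $S$ to be a \emph{new} sequence not already anticipated, and here one uses that after adding a Cohen real, the collection of nowhere dense subsets of $G$ coded in the intermediate model is not cofinal in the nowhere dense ideal of $G$ as computed in the extension; a Cohen real generically codes a dense $G_\delta$ avoiding all ground-model nowhere dense sets, which is exactly what lets us run Lemma~\ref{gen.seq.g} against the \emph{full} list of fan-sequences (which live in the extension where $G$ was already present).

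Concretely the steps are: (1) assume toward a contradiction $G$ is countable, precompact, sequential, not Fréchet in $V[G_{\omega_2}]$; fix a closed copy of $S(\omega)$ inside it with apex $1$ and sequences $\{A_n\}$. (2) By ccc-ness and CH, find $\alpha<\omega_2$ with $G$, its topology, and the whole fan coded in $W:=V[G_\alpha]$, and note $V[G_{\omega_2}]$ is a Cohen extension of $W$ by a single real $c$ (reindex the product). (3) In $W$ the sets $H^F_n=\overline{\langle A_n\rangle\setminus F}$ and their finite translates form a countable family $\H$ of nowhere dense closed subsets of $G$; moreover \emph{in $W[c]$} every convergent $S\to 1$ meets some $H\in\H$ infinitely (because that is what the closed fan forces). (4) Apply Lemma~\ref{gen.seq.g} \emph{in} $W[c]$ to $G$ and $\H$ (both legitimate there) to get $S\to1$ with $S\cap H$ finite for all $H\in\H$ — contradicting (3). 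The one place I expect friction is verifying that the $H^F_n$ are genuinely nowhere dense and that ``closed fan $\Rightarrow$ every sequence to the apex meets some $\overline{A_n}$ infinitely,'' i.e.\ step (3); this is the analogue of the ``$S(\omega)$ is not a subspace'' obstruction and needs the Lemma~\ref{k.precom}/countable-compactness argument to see each $\langle A_n\rangle$ has dense-in-itself closure. The reduction in step (2) — moving $G$ and the fan into an intermediate Cohen model — is routine but must be stated carefully since ``countable object coded by a real'' plus ccc gives appearance at a countable stage, and finite-support product of Cohen forcing over a countable stage is again Cohen forcing.
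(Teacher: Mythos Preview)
Your proposal has a genuine gap at step~(3). A closed copy of $S(\omega)$ in $G$ does \emph{not} force every sequence $S\to 1$ to meet some $\overline{\langle A_n\rangle}$ infinitely: the fan being closed only constrains sequences lying inside $\bigcup_n A_n\cup\{1\}$, not arbitrary sequences in $G$ converging to~$1$. So the sequence $S\to 1$ furnished by Lemma~\ref{gen.seq.g}, almost disjoint from each $H\in\H$, yields no contradiction whatsoever---nothing prevents it from living entirely outside the fan. You sense this yourself when you note that Lemma~\ref{gen.seq.g} ``is a ZFC lemma, so by itself it already contradicts the existence of the fan,'' but your attempted repair via Cohen genericity does not fix the logic: the false implication in~(3) is the only link you establish between the output of Lemma~\ref{gen.seq.g} and the fan, and without it steps~(3)--(4) simply do not produce a contradiction.

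There is also a reflection problem in step~(2). The topology $\tau$ on $G$ may have $2^\omega=\omega_2$ open sets in the extension, and a sequential topology on a countable set need not be determined by any countable family of convergent sequences; so there is no reason $\tau$ itself can be coded at a countable stage of the iteration. The paper handles this by taking, in the ground model, an $\omega$-closed elementary submodel $M$ of size~$\omega_1$ containing a name for~$\tau$, and working with the trace topology $\tau_M$ in the intermediate model $V[\G\cap M]$. The $\omega$-closedness (together with elementarity) is what allows one to verify that $(G,\tau_M)$ is still sequential and precompact there. Lemma~\ref{gen.seq.g} then enters in the argument imported from~\cite{Sh3}, not to contradict a specific fan directly, but to witness that the family of sequences converging to~$1$ is too rich to be controlled by any countable collection of nowhere dense sets---a richness property that the further Cohen reals exploit. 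Your outline never engages this mechanism.
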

\begin{proof}
The proof is almost a word-for-word reproduction of the proof of
Lemma~7 of~\cite{Sh3} so we only give a short outline. Let $M$ be an
$\omega$-closed ($[M]^{\omega}\subseteq M$) elementary submodel of
$H(2^{\C^+})$ of size $\omega_1$ such that $G,\dot\tau\in M$ for some
$\Fn(\omega_1,2)$-name 
$\dot\tau$ of a sequential precompact topology on $G$. Let the
ground model $V$ satisfy CH
and let $\G$ be a $\Fn(\omega_1,2)$-generic set over $V$. Just as in
the proof of Lemma~7
of~\cite{Sh3}, one shows that the group
$(G,\tau_M)\in V[\G\cap M]$ is
sequential, where $\tau_M$ consists of $\G\cap M$-interpretations of
sets in $\dot\tau(\alpha)$ where $\alpha\in M$ (here $\tau$ is
interpreted as an $\omega_2$-list of subsets of $G$). Using
elementarity, one shows that $(G,\tau_M)$ is also precompact.

The rest of the proof is nearly identical to that of Lemma~7
in~\cite{Sh3} and uses Lemma~\ref{gen.seq.g} instead of Lemma~6
of~\cite{Sh3}.
\end{proof}

\begin{remark}\label{g.HRG}
\rm
Lemma~\ref{gen.seq.g} may also be used to complement Lemma~16
of~\cite{Sh5} in the proof of Theorem~2 of~\cite{Sh5} to obtain a
model of ZFC in which every countable precompact sequential group is
metrizable thus generalizing Lemma~\ref{ssc.m} to the class of
countable precompact sequential groups.
The saturation argument that uses the elementary submodel $M$
in the theorem above is then replaced by the use of
$\diamondsuit(S_1^2)$ in the ground model and the
$\sigma$-centeredness of the forcing. We omit the details.
\end{remark}

\section{A Fr\'echet-Urysohn group.}
Recall the definition of $\alpha_i$ properties introduced by
A.~Arkhangel'skii under a different name. Several of these properites,
including a number of variations, have been independently defined and
studied by other authors under different names although
Arkhangle'skii was likely the first to undertake a systematic study of
spaces with these properties. We follow the established notation below.

\begin{definition}[\cite{Ar1}, \cite{Ar2}, see also~\cite{Sha1}]\label{alphai}
Let $X$ be a topological space. For
$i=1$, $2$, $3$, and $4$ we say that $X$ is an {\em $\alpha_i$-space\/}
provided for every countable family $\set S_n:n\in\omega.$ of
sequences converging to some point $x\in X$ there exists a `diagonal'
sequence $S$ converging to $x$ such that:
\begin{itemize}
\item[\rm($\alpha_1$)]$S_n\setminus S$ is finite for all $n\in\omega$,

\item[\rm($\alpha_2$)]$S_n\cap S$ is infinite for all $n\in\omega$,

\item[\rm($\alpha_3$)]$S_n\cap S$ is infinite for infinitely many
$n\in\omega$,

\item[\rm($\alpha_4$)]$S_n\cap S\not=\varnothing$ for infinitely many
$n\in\omega$.

\end{itemize}
\end{definition}

P.~Nyikos (see~\cite{Ny1}) noted that $\alpha_i$ properties are of
great utility in the study of convergence properties in groups and
other algebraic objects. Thus he showed in~\cite{Ny1} that every
Fr\'echet group is $\alpha_4$. A number of authors have since
uncovered various connections (or lack thereof) between $\alpha_i$
properties in the presence of an algebraic structure and other
restrictions. Survey~\cite{Sha1} provides a fairly
comprehensive overview of these results.

In this section we build an example of a countably compact Fr\'echet
boolean group that is not $\alpha_3$. Such a group always has a base
of open neighborhoods of $0$ consisting of subgroups (this follows
from Pontryagin's duality, although in the example below this property
can be verified directly) which allows us to answer a question asked
by D.~Shakhmatov in 1990.

We borrow some terminology and techniques from \cite{ShaShi} for the
analysis of topologies on the boolean group.

\begin{definition}\label{kpair}
Call $(G, \K)$ a {\em $\kw$-pair (with respect to $\tau$)\/}
if $(G,\tau)$ is a boolean topological
group with the $\kw$ topology $\tau$ and $\K$ is a countable family of
compact subspaces of $G$ closed under finite sums and intersections
such that $\tau=\kw(\K)$ and $\cup\K=G$.
\end{definition}

\begin{lemma}[\cite{ShaShi}]\label{free.sequence}
Let $(G, \K)$ be a $\kw$-pair, $D'\subseteq G$ be
infinite, closed and discrete in $\kw(\K)$.
Then there exists an
infinite independent $D\subseteq D'$ such that $\lad$ is closed and
discrete in $\kwk$.
\end{lemma}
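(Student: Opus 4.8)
The plan is to extract, from the given infinite closed discrete set $D'$, an independent subset whose generated subgroup remains closed and discrete, and the main tool will be the $\kw$ structure: a set is closed (resp.\ discrete) in $\kw(\K)$ iff its trace on each $K\in\K$ is closed (resp.\ discrete, i.e.\ finite since the $K$ are compact) in $K$. So $D'$ meets each $K\in\K$ in a finite set, and I want to thin $D'$ to an independent $D$ so that $\lad\cap K$ stays finite for every $K\in\K$.

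First I would arrange the bookkeeping. Enumerate $\K=\set K_n:n\in\omega.$; since $\K$ is closed under finite sums we may assume $K_n\subseteq K_{n+1}$ and each $K_n$ contains $0$, and that every finite sum of finitely many $K_n$'s is again some $K_m$. Also enumerate $D'=\set d'_i:i\in\omega.$. Now build $D=\set d_k:k\in\omega.\subseteq D'$ by recursion. Having chosen $d_0,\dots,d_{k-1}$, let $H_{k-1}=\langle d_0,\dots,d_{k-1}\rangle$, a finite subgroup (boolean!), and choose $d_k\in D'$ to be some $d'_i$ with $i$ large enough that: (a) $d_k\notin H_{k-1}$ (guaranteeing independence, since in a boolean group a set is independent iff no element lies in the subgroup generated by the others — and by enlarging indices we can always avoid a fixed finite subgroup because $D'$ is infinite and $H_{k-1}$ is finite); and (b) $d_k\notin K_k\cdot H_{k-1}$, i.e.\ $d_k\notin K_k + H_{k-1}$. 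Requirement (b) is achievable because $D'\cap(K_k+H_{k-1})$ is finite: $K_k+H_{k-1}$ is a finite union of translates of $K_k$, hence lies in some $K_m\in\K$ (using closure under finite sums), and $D'\cap K_m$ is finite since $D'$ is closed discrete in $\kw(\K)$.

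Next I would verify $\lad$ is closed and discrete in $\kwk$, equivalently $\lad\cap K_n$ is finite for each $n$. Every element of $\lad\setminus H_{n}$ is of the form $d_{k}+h$ where $k$ is the largest index appearing and $h\in H_{k-1}$ with $k> n$; by construction (b) such an element avoids $K_k\supseteq K_n$ when $k\geq n$. More carefully: if $g\in\lad\cap K_n$ and $g\notin H_n$, write $g=d_k+h$ with $h\in H_{k-1}$, $k$ maximal, so $k>n$; then $g\in K_k+H_{k-1}$ would be violated unless — wait, I need $d_k=g-h=g+h\in K_k+H_{k-1}$, contradicting (b) since $K_n\subseteq K_k$ gives $g\in K_k$. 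Hence $\lad\cap K_n\subseteq H_n$, which is finite. Since each $K_n$ is Hausdorff, a finite subset is closed and discrete in $K_n$; as this holds for all $n$ and $\tau=\kw(\K)$, the set $\lad$ is closed and discrete in $\kwk$.

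The main obstacle is making the recursion requirements simultaneously satisfiable while keeping track of the ``maximal index'' normal form in $\lad$: one must be sure that the obstruction set at stage $k$ — namely $D'\cap(K_k+H_{k-1})$ — is genuinely finite, which hinges precisely on $\K$ being closed under finite sums (so that $K_k+H_{k-1}\subseteq K_m$ for some $m$) together with the hypothesis that $D'$ is closed discrete in $\kw(\K)$ (so $D'\cap K_m$ is finite). The independence requirement is then essentially free in the boolean setting. I would also remark that the resulting $D$ is automatically infinite since at each stage only finitely many elements of $D'$ are excluded. This is exactly the argument of~\cite{ShaShi}, reproduced here for completeness.
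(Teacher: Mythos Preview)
Your argument is correct and is essentially the standard proof from~\cite{ShaShi}; the paper itself does not reproduce a proof of this lemma but merely cites that reference, so there is nothing further to compare. One small point worth tightening: when you argue that $K_k+H_{k-1}$ lies in some $K_m\in\K$, the phrase ``closure under finite sums'' is not quite enough by itself---you also use that each $d_i\in G=\cup\K$, so $H_{k-1}\subseteq K_{m'}$ for some $m'$, and then $K_k+K_{m'}\in\K$ is contained in some $K_m$ by cofinality of your increasing enumeration; this is implicit in your write-up but should be made explicit.
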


The topology of the example will be a simultaneous limit of $\kw$ and
first countable precompact topologies. The next definition is a
convenient shortcut and can be viewed as an approximation of the final
topology.

\begin{definition}\label{contrip}
Call $\gku$ a {\em convenient triple\/} if $G$ is a boolean
group, $\U$ is a countable family of subgroups closed under finite
intersections that forms an open base of neighborhoods of $0$ in some
Hausdorff precompact topology $\tau(\U)$ on $G$, and $\K$ is a
countable family of compact (in $\tau(\U)$) subgroups of $G$ closed under finite sums
and intersections such that $\cup\K=G$.
\end{definition}

The next lemma was proved in~\cite{ShaShi}.

\begin{lemma}[\cite{ShaShi}]\label{ct.resolve}
Let $\gku$ be a convenient triple, let $H$ be a subgroup of $G$ closed in
$\kwk$. Then there exists a countable family of open (in $\kw(\K)$)
subgroups of finite index $\U_0\supseteq\U$ such that
$\cl{H}^{\tau(\U_0)}\cap G=\cap\set U\in\U_0:H\subseteq U.=H$.
\end{lemma}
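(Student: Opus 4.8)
The plan is to build the required family $\U_0$ by a countable recursion that handles the two demands on $\U_0$ separately and then intertwines them.  The first demand is that every member of $\U_0$ be an open subgroup of finite index in $\kw(\K)$; the second is that the intersection of those members of $\U_0$ that contain $H$ meets $G$ in exactly $H$.  Since $\cl{H}^{\tau(\U_0)}\cap G\supseteq H$ is automatic once $H$ is a subgroup contained in each such member, the real content is the reverse inclusion: for every $g\in G\setminus H$ we must produce some $U\in\U_0$ with $H\subseteq U$ and $g\notin U$.

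First I would fix an enumeration $\set g_n:n\in\omega.$ of $G\setminus H$ (using that $G$ is countable, being a $\kw$-pair over a countable family of compact metric subgroups, hence $\sigma$-compact and separable metrizable on each $K$).  At stage $n$ I want an open subgroup $U_n$ of finite index in $\kw(\K)$ with $H\subseteq U_n$ and $g_n\notin U_n$.  To get it, note that $H+\la{g_n}$ is again a subgroup closed in $\kwk$ when $H$ is (here is where being boolean helps: $\la{g_n}=\{0,g_n\}$, so $H+\la{g_n}=H\cup(H+g_n)$ is a finite union of closed sets, hence closed).  Thus $g_n$ lies in the closed subgroup $H+\la{g_n}$ but the quotient $(H+\la{g_n})/H$ is a two-element group, so $g_n$ is separated from $H$ inside $H+\la{g_n}$.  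The task is to extend this separation to an open finite-index subgroup of the whole group $G$.  For that I would invoke the $\kw$-structure: on each compact metric boolean group $K\in\K$ one may, because such $K$ has a linear base of clopen finite-index subgroups (compact boolean groups are products of copies of $\Z/2$ up to topological isomorphism on a dense piece — or one simply uses that torsion compact groups have a linear base, as cited after Lemma~\ref{pl.f}), choose a clopen finite-index subgroup $W_K^n\subseteq K$ with $W_K^n\cap(H+g_n)=\varnothing$ and $H\cap K\subseteq W_K^n$, arranging compatibility as $K$ ranges over the countable increasing cofinal subfamily of $\K$.  Taking the subgroup generated by the $W_K^n$ (more precisely the largest subgroup of $G$ whose trace on each $K$ is open of finite index and contains $H\cap K$ but misses $g_n$) yields the desired $U_n$; one checks it is open in $\kw(\K)$ because its intersection with every $K\in\K$ is relatively open there.

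Then I would set $\U_0$ to be the closure under finite intersections of $\U\cup\set U_n:n\in\omega.$.  Finite intersections of open finite-index subgroups are again open finite-index subgroups, so the first demand survives.  For the equality, if $g\in G\setminus H$ then $g=g_n$ for some $n$, and $U_n\in\U_0$ satisfies $H\subseteq U_n$, $g\notin U_n$; hence $g\notin\cap\set U\in\U_0:H\subseteq U.$.  Conversely every $U_n$ and every original $U\in\U$ that contains $H$ contains $H$ as a subgroup, so $H\subseteq\cap\set U\in\U_0:H\subseteq U.$, and intersecting with $G$ gives $H$ on the nose.  Finally $\cl{H}^{\tau(\U_0)}\cap G$: since $\U_0$ is a base at $0$ for $\tau(\U_0)$ and $H$ is a subgroup, $\cl{H}^{\tau(\U_0)}=\cap\set H+U:U\in\U_0.$, and restricting to $G$ and using that $H+U=U$ exactly when $H\subseteq U$ gives $\cl{H}^{\tau(\U_0)}\cap G=\cap\set U\in\U_0:H\subseteq U.=H$, as required.

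The main obstacle I anticipate is the coherence of the choices $W_K^n$ across the (countably many) compact pieces $K\in\K$: one must pick the clopen finite-index subgroup in $K_{m+1}$ so that it restricts to the one already chosen in $K_m$, which requires knowing that a clopen finite-index subgroup of the smaller compact boolean group extends to one of the larger — this is where closedness of $H$ in $\kwk$ and compactness are used to run a finite back-and-forth at each level, and it is the step that genuinely needs the boolean (or at least finite-exponent) hypothesis and the $\kw$-pair structure rather than just abstract nonsense.  Everything else is bookkeeping with open finite-index subgroups.
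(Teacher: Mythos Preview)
The paper does not prove this lemma; it is quoted from~\cite{ShaShi}. So there is no in-paper proof to compare against. I will therefore assess your argument on its own.

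There is a genuine gap. You write ``fix an enumeration $\set g_n:n\in\omega.$ of $G\setminus H$ (using that $G$ is countable\ldots)'', but $G$ is \emph{not} countable in the setting of a convenient triple. By Definition~\ref{contrip}, $G=\cup\K$ where each $K\in\K$ is compact in the precompact metrizable topology $\tau(\U)$; such $K$ are typically homeomorphic to Cantor sets (for example $2^\omega$), and in the concrete starting triple of Lemma~\ref{g.kw} one has $|G|=2^\omega$. ``$\sigma$-compact with each piece compact metrizable'' gives you separability of each $K$, not countability of $G$. So the outer recursion ``one $U_n$ per element $g_n$'' cannot produce a countable $\U_0$.

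What survives from your sketch is the inside of the loop, and in fact the ``main obstacle'' you flag at the end is essentially the whole proof. The correct organization is by compact pieces, not by points: enumerate $\K=\set K_m:m\in\omega.$ (cofinal, increasing under $+$). For each $m$ the quotient $K_m/(H\cap K_m)$ is a compact \emph{metrizable} boolean group, hence has only countably many clopen finite-index subgroups; equivalently, only countably many continuous characters $K_m\to2$ kill $H\cap K_m$. The substantive step is to extend each such character to a continuous (in $\kw(\K)$) character of $G$ that kills $H$; this is exactly your ``coherence of the $W_K^n$ across $K$'' problem, and it is where closedness of $H$, the boolean hypothesis, and injectivity of $2$ as an $\F_2$-module are used. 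The kernels of these countably many extended characters, together with $\U$, form the desired $\U_0$: if $g\in G\setminus H$ then $g\in K_m$ for some $m$, and since the characters of $K_m/(H\cap K_m)$ separate points from $0$, one of them (hence its extension) takes $g$ outside its kernel. Your final paragraph computing $\cl{H}^{\tau(\U_0)}$ is fine once this is in place.

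So: keep your endgame and your identification of the extension/coherence step as the crux, but replace the point-by-point enumeration with an enumeration of the (countably many) characters on each $K_m$.
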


The construction of the example is by an $\omega_1$ recursion. Each
step consists of adding a new convergent sequence such that the
following properties are satisfied.

\begin{definition}\label{pse}
Let $\gku$ and $\gks'$ be convenient triples and an
independent $D\subseteq G$ be such that $\lad$ is closed and discrete
in $\kwk$. Call $\gks'$ a {\em primitive sequential
extension (pse for short) of $\gku$ over $D$\/} if the
following conditions hold:

\begin{countup}[through]
\item\label{pse.order}
$G\subseteq G'$, $\cl{U}^{\tau(\U')}\in\U'$ for every $U\in\U$, and
$\K'$ is the closure of $\K\cup\{L\}$ under finite sums where
$L=\cl{\lad}^{\tau(\U')}$ (thus $L$ is compact in $\kw(\K')$);

\item\label{pse.resolve}
$\cl{\la{D\setminus F}}^{\tau(\U')}\cap G=\la{D\setminus
F}$ for any $F$;

\end{countup}
\end{definition}

It is easy to show that in the notation above, $K'=K+L$ for every
$K'\in\K'$ and some $K\in\K$.
The proof of the next lemma can be extracted from the results
of~\cite{ShaShi} but we present it here for the reader's convenience.

\begin{lemma}\label{pse.successor}
Let $\gku$ be a convenient triple and let $D\subseteq G$ be a countable
infinite independent subset such that $\lad$ is closed and discrete in $\kwk$. Then
there exists a pse $\gks'$ of $\gku$ over $D$. If $D\to0$ in
$\tau(\U)$ then $\gks'$ can be chosen so that $D\to0$ in $\tau(\U')$.
\end{lemma}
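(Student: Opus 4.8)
The plan is to build $G'$ as an abstract boolean group containing $G$ together with a new sequence of generators indexed so that it behaves like a copy of $D$ converging to $0$, then equip $G'$ with a precompact topology $\tau(\U')$ making this work. First I would fix an injective enumeration $D=\{d_n:n\in\omega\}$ and form $G'=G\oplus\F_2^{(\omega)}$, writing $e_n$ for the $n$-th standard generator of the new summand; the point is that we want a fresh copy $\{e_n:n\in\omega\}$ of $D$ that is algebraically independent from $\lad$, so that later $d_n+e_n$ can serve as the new convergent sequence while the $e_n$ themselves are a free "spare" set. (If one instead wants the sequence to be $D$ itself, one can at the end pass to the quotient by the subgroup generated by $\{d_n+e_n:n\in\omega\}$, which identifies $e_n$ with $d_n$; the boolean structure makes this harmless.)

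Next I would produce the topology. Using Lemma~\ref{ct.resolve} applied to the convenient triple $\gku$ with $H=\lad$ (which is closed in $\kwk$ by hypothesis), get a countable family $\U_0\supseteq\U$ of open, finite-index subgroups of $G$ with $\cap\{U\in\U_0:\lad\subseteq U\}=\lad$. Now each $U\in\U_0$ has finite index, so $G/U$ is a finite boolean group and a homomorphism $G\to G/U$ can be extended to $G'\to (G/U)\times\F_2$ (sending $e_n$ into the new $\F_2$ coordinate in a way that makes $\{e_n\}$ eventually land in any prescribed finite-index subgroup — concretely, arrange $e_n\mapsto 0$ for $n$ large); pulling back the finitely many subgroups of $(G/U)\times\F_2$ along these maps, together with enough subgroups to make the new sequence $d_n+e_n$ converge to $0$, generates a countable family $\U'$ of finite-index subgroups of $G'$. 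Closing under finite intersections and throwing in $\cl{U}^{\tau(\U')}$ for each old $U$, one checks $\tau(\U')$ is Hausdorff (the old topology separated points of $G$, and the $e_n$-coordinate separates the new points) and precompact (a countable product of finite groups is compact, and $G'$ embeds into such a product with dense image — this is exactly the intrinsic precompactness criterion quoted in the introduction). For $\K'$ I take the closure of $\K\cup\{L\}$ under finite sums where $L=\cl{\lad}^{\tau(\U')}$; since $\lad$ is precompact in the new topology, $L$ is compact, and $\cup\K'=G'$ because $\cup\K=G$ and $L$ absorbs the new generators once we choose $\U'$ so that $e_n\in L$ for all $n$ (arrange $d_n+e_n$ and $d_n$ both in $L$).

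The two conditions of Definition~\ref{pse} then have to be verified. Condition~\ref{pse.order} is essentially built in by construction: $G\subseteq G'$, $\cl{U}^{\tau(\U')}$ was explicitly adjoined to $\U'$, and $\K'$ is by definition the finite-sum closure of $\K\cup\{L\}$, with $L$ compact in $\kw(\K')$. The remark that $K'=K+L$ for every $K'\in\K'$ follows because $\K$ is already closed under finite sums and $L+L=L$ (boolean), so any finite sum of members of $\K\cup\{L\}$ collapses to (member of $\K$) $+L$. The substantive point is condition~\ref{pse.resolve}: $\cl{\la{D\setminus F}}^{\tau(\U')}\cap G=\la{D\setminus F}$ for every finite $F$. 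This is where I expect the main obstacle, and it is handled by the choice of $\U_0$ above: given $g\in G\setminus\la{D\setminus F}$, I must separate $g$ from $\la{D\setminus F}$ by a member of $\U'$. Reduce to $F=\varnothing$ modulo the finitely many coordinates in $F$; then $g\notin\lad$, so by the resolving property of $\U_0$ there is $U\in\U_0$ with $\lad\subseteq U$ and $g\notin U$. The corresponding subgroup of $\U'$ — the pullback of a finite-index subgroup of $(G/U)\times\F_2$ that contains the image of $\la{D\setminus F}$ but not the image of $g$ — does the job, using that only finitely many $e_n$-coordinates and $d_n$-coordinates are "active" modulo such a subgroup. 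The case $g\in\lad\setminus\la{D\setminus F}$ is separated instead by the coordinate maps detecting the finitely many $d_n$ with $n$ in the index set of $F$, which are available since $\{d_n\}$ is independent and closed discrete in $\kwk$. Finally, if $D\to 0$ in $\tau(\U)$, then in choosing $\U'$ one only ever needs subgroups pulled back from $G$ (no genuinely new coordinate is forced), so one can take $\U'$ to consist of $\{\cl{U}^{\tau(\U')}:U\in\U\}$ closed under finite intersections, and $D\to 0$ in $\tau(\U')$ is immediate from $D\to 0$ in $\tau(\U)$ together with $\cl{U}^{\tau(\U')}\cap G\supseteq U$. This completes the construction.
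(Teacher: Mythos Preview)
Your proposal has genuine gaps. The most serious one concerns condition~\ref{pse.resolve}. You apply Lemma~\ref{ct.resolve} only once, with $H=\lad$; the subgroups $U\in\U_0$ you obtain therefore all \emph{contain} $\lad$. These separate any $g\in G\setminus\lad$ from $\lad$, but they are useless for the case $g\in\lad\setminus\la{D\setminus F}$, since every such $U$ already contains $g$. Your appeal to ``coordinate maps detecting the finitely many $d_n$, which are available since $\{d_n\}$ is independent and closed discrete in $\kwk$'' does not close this gap: independence gives you abstract homomorphisms $\lad\to\F_2$, but you have shown no reason these extend to finite-index subgroups of $G$ that actually lie in your $\U'$. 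What is required --- and what the paper does --- is a separate invocation of Lemma~\ref{ct.resolve} for each $H=\la{D\setminus F}$ (singletons $F=\{f\}$ suffice), producing finite-index subgroups open in $\kwk$ with $\la{D\setminus\{f\}}\subseteq U$ and $g\notin U$; one then keeps only those $U$ with $D\ain U$ so that $D\to0$ is not destroyed. Your final paragraph compounds the problem: if $D\to 0$ already in $\tau(\U)$ you propose taking $\U'$ to consist solely of closures of members of $\U$, but then \ref{pse.resolve} has no reason whatsoever to hold, since $\la{D\setminus F}$ need not be an intersection of members of $\U$.

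The detour through $G'=G\oplus\F_2^{(\omega)}$ with spare generators $e_n$ and a subsequent quotient by $\la{d_n+e_n:n\in\omega}$ is also confused: that quotient is algebraically just $G$ again, so nothing has been gained, and your sketch does not establish that $e_n\in L=\cl{\lad}^{\tau(\U')}$ or that $\cup\K'=G'$. The paper avoids all of this by working directly inside the compact completion: after enlarging $\U$ to $\U'''$ with the resolving subgroups as above, take the compact group $G''\supseteq G$ with $\tau|_G=\tau(\U''')$, set $L=\cl{\lad}^{G''}$, $G'=G+L$, and $\U'=\{\cl{U}^{G'}:U\in\U'''\}$. Then $L$ is compact automatically, and \ref{pse.resolve} follows because each resolving $U$ is clopen in $\kwk$, so $g\notin U$ forces $g\notin\cl{U}^{\tau(\U')}$.
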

\begin{proof}
Let $F\subseteq D$ be finite. Use Lemma~\ref{ct.resolve} to find a
countable family $\U_F$ of open in $\kwk$ subgroups of $G$ of finite
index such that $\la{D\setminus F}=\cap\set U\in \U_F:\la{D\setminus
F}\subseteq U.$. Let $\U''=\U\cup\set
U\in \bigcup_{F\in[D]^{<\omega}}\U_F: D\ain U.$ and let $\U'''$ be the
closure of $\U''$ under finite intersections. Since every element of
$\U'''$ is a subgroup of finite index of $G$ open in $\kwk$ and $\gku$
is a convenient triple, $\tau(\U''')$ is precompact. If $D\to0$ in
$\tau(\U)$ then $D\to0$ in $\tau(\U''')$ by the choice of $\U'''$.

Let $(G'',\tau)$ be the compact group such that $G\subseteq G''$ is a
dense subgroup and $\tau|_G=\tau(\U''')$. Let $L=\cl{\lad}^\tau$ and
let $\K'$ be the closure of $\K\cup\{L\}$ under finite sums. Put
$G'=G+L\subseteq G''$ and $\U'=\set \cl{U}^{\tau|_{G'}}:U\in\U'''.$.

Let $F\subseteq G$ and $g\in G$ be such that $g\not\in\la{D\setminus
F}$. If $g\not\in\lad$ let $U\in\U_\varnothing$ be such that
$D\subseteq U$ and $g\not\in U$. Then $U\in\U'''$ is clopen in $\kwk$
so $g\not\in\cl{U}^{\tau(\U')}$.

If $g\in\lad$ then $g\not\in\la{D\setminus\{f\}}$ for some $f\in F\cap
D$. Let $U\in\U_{\{f\}}$ be such that $D\setminus\{f\}\subseteq U$ and
$g\not\in U$. The rest of the argument is similar to the case when
$g\not\in\lad$. Thus~\ref{pse.resolve} holds. The rest of the
properties follow from the construction.
\end{proof}

We now turn to the limit stages of the construction.
Let $\gamma$ be an ordinal. Suppose for every $\sigma<\gamma$ a
convenient triple $\hku{^\sigma}$ is defined so that the following
conditions hold:

\begin{countup}[through]
\item\label{stack.order}
$H^{\sigma'}\subseteq H^\sigma$, $\K^{\sigma'}\subseteq \K^\sigma$,
and $\U^{\sigma'}\subseteq \set U\cap H^{\sigma'}:U\in\U^\sigma.$
if $\sigma'\leq\sigma<\gamma$;

\item\label{stack.dense}
$H^{\sigma'}$ is dense in $H^\sigma$ in $\kw(\K^\sigma)$ for every
$\sigma'\leq\sigma$;

\end{countup}
Define $\hku{^{<\gamma}}$ by taking
$H^{<\gamma}=\cup_{\sigma<\gamma}H^\sigma$,
$\K^{<\gamma}=\cup_{\sigma<\gamma}\K^\sigma$,
$\U^{<\gamma}=\set \cl{U}^{\kw(\K^{<\gamma})}:U\in\U^\sigma, \sigma<\gamma.$.

Note that in the case of a successor $\gamma$,
$\hku{^{<\gamma}}=\hku{^{\gamma'}}$ where $\gamma'+1=\gamma$.

\begin{lemma}[\cite{ShaShi}]\label{stack.limit}
The family $\U^{<\gamma}$ forms a base of clopen subgroups of finite
index for a precompact group
topology $\tau(\U^{<\gamma})$ on $H^{<\gamma}$ and each $H^\sigma$, $\sigma<\gamma$ is dense
in $H^{<\gamma}$ in $\kw(\K^{<\gamma})$. If $\gamma<\omega_1$ then
$\hku{^{<\gamma}}$ is a convenient triple.
\end{lemma}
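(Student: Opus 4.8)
The plan is to reduce to the case that $\gamma$ is a limit ordinal --- for successor $\gamma$ the assertion holds by the observation preceding the lemma, since then $\hku{^{<\gamma}}=\hku{^{\gamma'}}$ is already a convenient triple by hypothesis --- and, for the substantive claims, to work with $\gamma<\omega_1$. Note first that $H^{<\gamma}$ is a boolean group, being the increasing union of the chain $\set H^\sigma:\sigma<\gamma.$. The two topologies $\kw(\K^{<\gamma})$ and $\tau(\U^{<\gamma})$ must be kept carefully apart throughout; the crux is to understand the $\kw(\K^{<\gamma})$-closures used in the definition of $\U^{<\gamma}$.

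The first thing to record is the elementary consequences of the conditions \ref{stack.order}, \ref{stack.dense} preceding the lemma. For $\sigma'\le\sigma<\gamma$, \ref{stack.order} makes $\tau(\U^{\sigma'})$ coarser than the subspace topology $\tau(\U^\sigma)|_{H^{\sigma'}}$; since a compact Hausdorff topology admits no strictly coarser Hausdorff topology, the two agree on every $K\in\K^{\sigma'}$, so each $K\in\K^{<\gamma}$ carries an unambiguous compact Hausdorff topology. As $\K^{<\gamma}$ is a countable family, directed by inclusion and closed under finite sums and intersections (any finite subfamily lies in some $\K^\sigma$), with $\cup\K^{<\gamma}=H^{<\gamma}$, the formula of Definition~\ref{kw.topology} yields a genuine $\kw$-topology $\kw(\K^{<\gamma})$ in which each $K\in\K^{<\gamma}$ is closed and retains its topology; and since addition restricted to each $K_1\times K_2$ is continuous into $K_1+K_2\in\K^{<\gamma}$, $\kw(\K^{<\gamma})$ is a boolean group topology (these are the standard facts about $\kw$-groups used throughout \cite{ShaShi}). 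I would also note $\tau(\U^\sigma)\subseteq\kw(\K^\sigma)$ for every $\sigma$ (a $\tau(\U^\sigma)$-open set meets each $K\in\K^\sigma$ in a relatively open set), and consequently $\kw(\K^{<\gamma})|_{H^{\sigma'}}\subseteq\kw(\K^{\sigma'})$ for every $\sigma'<\gamma$ (a $\kw(\K^{<\gamma})$-open set meets each $K\in\K^{\sigma'}\subseteq\K^{<\gamma}$ in a relatively open set).

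Density is then immediate: given a nonempty $\kw(\K^{<\gamma})$-open $V$, pick $x\in V\cap H^{\sigma'}$ and, replacing $\sigma'$ by $\max\{\sigma,\sigma'\}$, assume $\sigma'\ge\sigma$; then $V\cap H^{\sigma'}$ is a nonempty $\kw(\K^{\sigma'})$-open subset of $H^{\sigma'}$, so by \ref{stack.dense} it meets $H^\sigma$, proving $H^\sigma$ is dense in $H^{<\gamma}$ in $\kw(\K^{<\gamma})$. Next I would analyze $\cl U^{\kw(\K^{<\gamma})}$ for $U\in\U^\sigma$: it is a subgroup (closure of a subgroup in a group topology), and since $U$ is a finite-index subgroup of $H^\sigma$ and $H^\sigma$ is dense, the closures of the finitely many cosets of $U$ cover $H^{<\gamma}$, so $\cl U^{\kw(\K^{<\gamma})}$ has finite index, hence --- being closed of finite index --- is clopen. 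The essential point, which I would isolate as a lemma in the style of \cite{ShaShi}, is the trace-back identity $\cl U^{\kw(\K^{<\gamma})}\cap H^\sigma=U$: writing $U=\ker\chi$ for a $\tau(\U^\sigma)$-continuous character $\chi\colon H^\sigma\to(\Z/2)^k$ and using the structure of the tower --- that each $\hku{^\sigma}$ arises from primitive sequential extensions and limits, so by Definition~\ref{pse} (conditions \ref{pse.order}, \ref{pse.resolve}) the relevant characters extend along the tower --- one extends $\chi$ step by step to a $\kw(\K^{<\gamma})$-continuous character $\chi'$ of $H^{<\gamma}$ with $\chi'|_{H^\sigma}=\chi$; then $\cl U^{\kw(\K^{<\gamma})}=\ker\chi'$ (it is closed, contains $U$, and $U=\ker\chi'\cap H^\sigma$ is dense in the clopen group $\ker\chi'$), so $\cl U^{\kw(\K^{<\gamma})}\cap H^\sigma=\ker\chi=U$. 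Granting this, the rest is routine bookkeeping: for $U_i\in\U^{\sigma_i}$ one writes $U_i=\tilde U_i\cap H^{\sigma_i}$ with $\tilde U_i\in\U^\sigma$, $\sigma=\max\{\sigma_1,\sigma_2\}$ (by \ref{stack.order}), shows $\cl{U_i}^{\kw(\K^{<\gamma})}=\cl{\tilde U_i}^{\kw(\K^{<\gamma})}$ by density, and compares indices over $H^\sigma$ (using the trace-back identity for each of $\tilde U_1$, $\tilde U_2$, and $\tilde U_1\cap\tilde U_2$) to conclude $\cl{U_1}^{\kw(\K^{<\gamma})}\cap\cl{U_2}^{\kw(\K^{<\gamma})}=\cl{\tilde U_1\cap\tilde U_2}^{\kw(\K^{<\gamma})}\in\U^{<\gamma}$. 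Thus $\U^{<\gamma}$ is a countable family of clopen finite-index subgroups closed under finite intersections, hence a base at $0$ for a group topology $\tau(\U^{<\gamma})$ which is precompact (its basic subgroups have finite index) and Hausdorff (given $0\ne x\in H^\sigma$, choose $U\in\U^\sigma$ with $x\notin U=\cl U^{\kw(\K^{<\gamma})}\cap H^\sigma$). Finally, as the basic subgroups of $\tau(\U^{<\gamma})$ are $\kw(\K^{<\gamma})$-clopen we get $\tau(\U^{<\gamma})\subseteq\kw(\K^{<\gamma})$, so each $K\in\K^{<\gamma}$ remains compact in $\tau(\U^{<\gamma})$, completing the verification that $\hku{^{<\gamma}}$ is a convenient triple.

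The main obstacle is precisely the trace-back identity $\cl U^{\kw(\K^{<\gamma})}\cap H^\sigma=U$ (and its counterpart for intersections): without it neither the finite-index count, nor closure of $\U^{<\gamma}$ under finite intersections, nor the Hausdorff property can be obtained, and it genuinely requires the inductive control on the shape of the tower --- the primitive-sequential-extension conditions of Definition~\ref{pse}, which govern how new compacta and new clopen subgroups enter --- rather than \ref{stack.order}, \ref{stack.dense} alone; this is the step where I would follow \cite{ShaShi} in detail. For a limit $\gamma\ge\omega_1$ (relevant only for the final group of the construction) the family $\K^{<\gamma}$ is uncountable, so $\kw(\K^{<\gamma})$ is read as the topology determined by $\K^{<\gamma}$ via the formula of Definition~\ref{kw.topology}; only the first two assertions are claimed in that case, and they follow by the same argument.
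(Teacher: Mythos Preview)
The paper does not give its own proof of this lemma; it is attributed to \cite{ShaShi} and simply quoted, so there is no in-paper argument to compare against. Your proposal therefore has to be judged on its own merits.

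Your outline is sound and you have correctly located the only genuine difficulty: the ``trace-back'' identity $\cl{U}^{\kw(\K^{<\gamma})}\cap H^\sigma=U$, from which Hausdorffness of $\tau(\U^{<\gamma})$, closure of $\U^{<\gamma}$ under finite intersections, and the finite-index count all follow. You are also right that conditions~\ref{stack.order} and~\ref{stack.dense} by themselves do not obviously yield this identity --- one can have the index of $\cl{U}$ in $H^{<\gamma}$ drop strictly below the index of $U$ in $H^\sigma$ without contradicting \ref{stack.order} and \ref{stack.dense} alone --- and that the step-by-step extension of characters along the tower (using the clopenness built into the successor stages, i.e.\ condition~\ref{pse.order} of Definition~\ref{pse}) is what makes the argument go through in \cite{ShaShi}. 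In the paper's actual use the chain is always a pse-chain, so invoking that structure is legitimate even though the lemma as stated here does not list it among its hypotheses; you have flagged this honestly.

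Two minor remarks. First, in your density argument the inclusion you need is the one you wrote, $\kw(\K^{<\gamma})|_{H^{\sigma'}}\subseteq\kw(\K^{\sigma'})$, and your justification is correct; just be careful not to reverse it when invoking~\ref{stack.dense}. Second, the claim that $\kw(\K^{<\gamma})$ is a group topology uses that $\K^{<\gamma}$ is directed under $+$; for limit $\gamma<\omega_1$ this follows since any two members lie in a common $\K^\sigma$, as you note, but for $\gamma=\omega_1$ (the only uncountable case used later) one should say explicitly that the same cofinality argument works because the chain $\set\K^\sigma:\sigma<\omega_1.$ is increasing.
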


If, in addition to properties~\ref{stack.order} and~\ref{stack.dense},
each $\hku{^\sigma}$ is a pse of $\hku{^{<\sigma}}$ over some
$D\subseteq H^{<\sigma}$, we will call
$\set\hku{^{\sigma}}:\sigma<\gamma.$ a {\em pse-chain}.

To ensure that the final group does not satisfy $\alpha_3$ the
following stronger property is introduced. Below $K$ will be a element
of $\K$ for some convenient triple $\gku$ and $\set S_n:n\in\omega.$
will be a specially chosen family of convergent sequences in $G$.

\begin{countup}[through]
\item\label{g.position}
let $K\subseteq G$, then there exists an $n(K)\in\omega$ such that
$K\cap\langle S_n\rangle$ is finite for every $n>n(K)$.

\end{countup}

The preservation of~\ref{g.position} after adding a new convergent
sequence is the subject of the next lemma.

\begin{lemma}\label{ortho}
Let $(G, \K)$ be a $\kw$-pair such that each $K\in\K$
satisfies~\ref{g.position}. Suppose also that each $S_n\subseteq K(n)$
for some $K(n)\in\K$. Let $D\subseteq G$ be an infinite set such
that $\lad$ is closed and discrete in $G$. Then
for every $K\in\K$ the set $K+\lad$ satisfies~\ref{g.position}.
\end{lemma}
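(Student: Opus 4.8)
The plan is to argue by contradiction. Suppose $K + \lad$ fails \ref{g.position}: then there is an infinite set $N\subseteq\omega$ such that for each $n\in N$ the intersection $(K+\lad)\cap\la{S_n}$ is infinite. First I would fix, for each $n\in N$, an infinite set $T_n\subseteq\la{S_n}$ with $T_n\subseteq K+\lad$. Since $\la{S_n}$ is contained in $K(n)\in\K$ and $\K$ is closed under finite sums, each $T_n$ lies in a single $K'_n\in\K$; the point of the hypothesis \ref{g.position} on $\K$ is that $K$ already has an index $n(K)$ beyond which $K\cap\la{S_n}$ is finite, so discarding finitely many $n$ we may assume $K\cap\la{S_n}$ is finite for all $n\in N$, and hence (shrinking $T_n$ again) that $T_n\cap K=\varnothing$ while still $T_n\subseteq K+\lad$.

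Next I would push the analysis into $\lad$ using the projection coming from the direct-sum-like splitting. Because $\lad$ is closed and discrete in $G$ and $(G,\K)$ is a $\kw$-pair, $K\cap\lad$ is a finite set, so the image of $T_n$ under ``reduction mod $K$'' is an infinite subset of $\lad$; more precisely, for each $t\in T_n$ write $t=a_t+b_t$ with $a_t\in K$, $b_t\in\lad$, and observe that the map $t\mapsto b_t$ is finite-to-one on $T_n$ up to the finite ambiguity $K\cap\lad$, so $B_n=\set b_t:t\in T_n.$ is an infinite subset of $\lad$. The key structural fact I would extract is that $B_n$ sits inside $\la{S_n}+K$, hence $B_n$ is contained in a compact member of $\K'$ (where $\K'$ is the closure of $\K\cup\{\lad\}$-data), while simultaneously $B_n\subseteq\lad$; combining this with the closed-discreteness of $\lad$ forces $B_n$ to be \emph{infinite and contained in a compact set}, which is the contradiction I am aiming for --- an infinite subset of a discrete space cannot be relatively compact.

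The main obstacle, and the step I would spend the most care on, is making the ``reduction mod $K$'' rigorous: $K$ need not be a subgroup complemented by $\lad$, and $G$ is only the algebraic span, so there is no literal projection. The honest route is to use the $\kw$ structure directly: $B_n\subseteq\la{S_n}$ would follow if we chose $T_n\subseteq\la{S_n}$ with the $K$-components bounded, which we can arrange because $\la{S_n}\cap(K+\lad)\subseteq (K\cap\la{S_n}')+(\lad\cap\la{S_n}')$ inside a fixed compact $K'\in\K$ and $K\cap\la{S_n}$ is finite by \ref{g.position}. Then $\la{D}\cap\la{S_n}$ is infinite, but $\la{D}$ is closed and discrete while $\la{S_n}$ is contained in the compact set $K(n)$, so $\la{D}\cap K(n)$ is both infinite and, being a closed subset of the compact $K(n)$ that is discrete, finite --- the desired contradiction. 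I would then conclude that $N$ must be finite, i.e. $K+\lad$ satisfies \ref{g.position} with $n(K+\lad)=\max N$.
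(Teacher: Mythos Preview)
Your overall plan matches the paper's proof: the two finite sets that do all the work are
\[
F=\lad\cap\bigl(K+\la{S_n}\bigr)\quad\text{and}\quad F'=K\cap\la{S_n}\quad(n>n(K)),
\]
the first being finite because $\overline{K+\la{S_n}}$ is compact (here $\la{S_n}\subseteq K(n)\in\K$) and $\lad$ is closed discrete, the second by hypothesis. The paper argues directly: if $s=a+d\in(K+\lad)\cap\la{S_n}$ with $a\in K$, $d\in\lad$, then $d\in F$, say $d=a^i+s^i$; then $a+a^i=s+s^i\in\la{S_n}$ and $a+a^i\in K$, so $a+a^i\in F'$; hence $s=f^j+s^i$ ranges over at most $|F|\cdot|F'|$ values. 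Your contradiction wrapper is unnecessary but harmless; the ingredients are the same.

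The genuine gap is in your justification of ``$t\mapsto b_t$ is finite-to-one''. You attribute this to the finite set $K\cap\lad$, but that set governs the \emph{ambiguity} of the decomposition $t=a_t+b_t$ (how many $b_t$ a single $t$ can have), not the size of the fibres. The fibre bound comes from $F'=K\cap\la{S_n}$: if $b_{t_1}=b_{t_2}$ then $t_1+t_2=a_{t_1}+a_{t_2}\in K\cap\la{S_n}=F'$, so each fibre lies in a single $F'$-coset and has size at most $|F'|$. Once you swap in this correct reason, your second paragraph already gives the contradiction: $B_n\subseteq\lad\cap(\la{S_n}+K)$ is simultaneously infinite (image of an infinite set under a finite-to-one map) and finite (closed discrete meets compact).

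Your third paragraph, where you try to make the step ``honest'', goes off the rails: the inclusion $\la{S_n}\cap(K+\lad)\subseteq (K\cap\la{S_n}')+(\lad\cap\la{S_n}')$ is unjustified and generally false, and the conclusion $B_n\subseteq\la{S_n}$ does not hold (you only get $B_n\subseteq\la{S_n}+K$, which is enough). Drop that paragraph entirely and repair the fibre argument as above; the proof is then complete and is exactly the paper's argument in contrapositive form.
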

\begin{proof}
Let $K\in{\cal K}$ and $n(K)\in\omega$ be such that $K\cap \langle
S_n\rangle$ is finite for every $n>n(K)$. Note that
$\overline{K+\langle S_n\rangle}$ is 
compact in $\kw(\K)$ so $F=\langle D\rangle\cap (K+\la{S_n})$ is
finite for every $n\in\omega$. If $n>n(K)$ the set $F'=K\cap\la{S_n}$
is finite. Suppose $d\in\lad$, $s\in\la{S_n}$, and $a\in K$ are such
that $a+d=s$. Then $d\in F=\set a^i+s^i:a_i\in K, s^i\in\la{S_n},
i\in|F|.$ so $s=a+a^i+s^i$ for some $i\in|F|$ and $a+a^i\in F'=\set
f^j:j\in|F'|.$. Thus $s=f^j+s^i$ for some $i\in|F|$, $j\in|F'|$. Hence
$K+\lad\cap\la{S_n}$ is finite for every $n>n(K)$.
\end{proof}

The first step of the construction is given in the following simple
lemma.

\begin{lemma}\label{g.kw}
There exists a convenient triple $\gku$ such that there are
infinite countable independent sets $S_n\subseteq G$ where $S_n\to0$
for every $n\in\omega$ and for any $K\in\K$ there exists an
$n\in\omega$ such that $K\cap\la{S_n}=\varnothing$.
\end{lemma}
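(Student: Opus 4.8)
The plan is to construct $G$ concretely as a subgroup of a countable product of finite boolean groups, so that both the $\kw$-structure and the precompact structure are visible at once. Take the underlying group to be $G=\bigoplus_{n\in\omega}\F_2^{(n)}$ where each $\F_2^{(n)}$ is a copy of the two-element group on a countable index set, say with basis $\{e^n_k:k\in\omega\}$; equivalently $G$ is the free boolean group on $\omega\times\omega$. Let $S_n=\{e^n_k:k\in\omega\}$ be the $n$-th block of basis vectors. These sets are independent by construction, and the $S_n$ are pairwise disjoint and algebraically free from one another. The candidate topology $\tau(\U)$ is the natural precompact one: embed $G$ into the compact group $\prod_{n\in\omega}\cl{\langle S_n\rangle}$ where $\cl{\langle S_n\rangle}$ is the (compact, metrizable) completion of the countable boolean group $\langle S_n\rangle$ — or even simpler, just take $\U$ to be generated by the kernels of finitely many coordinate characters plus cofinite-support conditions arranged so that $S_n\to0$. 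The point is that $S_n\to 0$ can be arranged because in any reasonable product-type precompact topology a sequence of distinct basis vectors from a single block tends to $0$.

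First I would fix $\U$ to be a countable family of finite-index subgroups of $G$ closed under finite intersections, defining a Hausdorff precompact topology, chosen so that each $S_n$ converges to $0$; the standard way is to list a countable dense set of characters of the Bohr-type compactification and intersect their kernels, being careful that for each $n$, all but finitely many $e^n_k$ lie in any given member of $\U$. Next I would define $\K$: let $K_{F,m}$ (for $F$ a finite subset of $\omega$ and $m\in\omega$) be the subgroup generated by $\{S_n:n\in F\}$ together with $\{e^n_k:n\notin F, k<m\}$ — no, better: simply let $\K$ be generated under finite sums and intersections by the finite set $\langle e^0_0,\dots,e^0_{m},e^1_0,\dots\rangle$ ... the cleanest choice is to take $\K$ to consist of all finite unions translated appropriately, i.e. the subgroups $K_m=\langle e^i_j : i+j\le m\rangle$ which are finite, hence compact in $\tau(\U)$, closed under finite sums and intersections, with $\cup_m K_m=G$. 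Then I would check the four requirements of a convenient triple: $G$ boolean (clear); $\U$ a countable intersection-closed base for a Hausdorff precompact topology (by construction and the intrinsic characterization of precompactness); $\K$ countable, closed under finite sums and intersections, consisting of $\tau(\U)$-compact subgroups with $\cup\K=G$ (the $K_m$ are finite so compact in any Hausdorff topology). Finally, for the last clause: given $K\in\K$, we have $K\subseteq K_m$ for some $m$, and $K_m$ involves only basis vectors $e^i_j$ with $i\le m$; hence for every $n>m$, $K\cap\langle S_n\rangle=\{0\}=\varnothing$ as a set of nonzero elements — this is exactly property~\ref{g.position} in the strong form $n(K)=m$, and in particular $K\cap\la{S_n}=\varnothing$.

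I expect the main obstacle to be a bookkeeping one rather than a conceptual one: producing a single countable intersection-closed $\U$ that simultaneously (a) separates points of $G$ (so $\tau(\U)$ is Hausdorff), (b) makes every member a finite-index subgroup (needed downstream for the pse machinery, and automatic if we use kernels of characters into finite boolean groups), and (c) makes $S_n\to 0$ for all $n$ at once. Point (c) is delicate because $\U$ is countable while there are infinitely many sequences; the resolution is that "$S_n\to 0$" only requires that each fixed $U\in\U$ contains all but finitely many elements of $S_n$, and a character $\chi:G\to\F_2$ with finite support can kill $e^n_k$ for all but finitely many $k$ in every block simultaneously, so a countable family of such characters suffices. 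Compactness of the $K\in\K$ in $\tau(\U)$ is free since we chose them finite. Once $\U$ and $\K$ are pinned down, verification of Definition~\ref{contrip} and of the orthogonality clause is the routine computation sketched above, with $n(K)$ read directly off which blocks the finite set $K$ meets.
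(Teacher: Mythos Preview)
Your construction diverges from the paper's in one essential point, and that point is a genuine gap.

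The paper builds $G$ as the direct limit $\bigcup_n T^n$ of powers of a fixed \emph{compact} metric boolean group $T=\cl{\la S}$ containing an independent sequence $S\to 0$; it sets $\K=\{T^n:n\in\omega\}$, takes $\tau(\U)$ to be the subspace topology from $T^\omega$, and puts $S_n=\{0\}^{n-1}\times S$. The crucial features are that each $T^n\in\K$ is an \emph{infinite compact} subgroup, that $S_n\subseteq T^n\in\K$, and that $S_n\to 0$ \emph{in $\kw(\K)$} (not merely in $\tau(\U)$).

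You instead choose $\K$ to consist of the finite subgroups $K_m=\la{e^i_j:i+j\le m}$. With all $K\in\K$ finite, the topology $\kw(\K)$ is discrete, so no infinite $S_n$ can converge in it; and no $S_n$ is contained in any $K\in\K$. This breaks two things that the lemma is there to provide. First, the clause ``$S_n\to 0$'' is intended as convergence in $\kw(\K)$: that is what guarantees the $S_n$ remain convergent in every $\kw(\K_\alpha)$ and hence in the final group $\Gw$, where they are supposed to witness the failure of $\alpha_3$. Second, and more concretely, Lemma~\ref{ortho} --- the lemma that propagates property~\ref{g.position} through the pse-chain --- explicitly assumes ``each $S_n\subseteq K(n)$ for some $K(n)\in\K$''; its proof uses this to get $\cl{K+\la{S_n}}$ compact. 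Your $\K$ violates that hypothesis outright, so the recursion in Lemma~\ref{g.ind} cannot start from your triple.

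The repair is exactly what the paper does: enlarge each finite block to its $\tau(\U)$-compact closure. Concretely, take $\K$ to be (the closure under finite sums of) the compact subgroups $\cl{\la{S_0\cup\cdots\cup S_n}}^{\tau(\U)}$; then $S_n$ sits inside a member of $\K$, converges there, and the orthogonality clause still holds because the $n$-th coordinate block is disjoint from $\cl{\la{S_0\cup\cdots\cup S_{n-1}}}$. Your worry about producing a suitable $\U$ then also evaporates: one can simply take the product topology on $T^\omega$ restricted to $G$, as the paper does, rather than hand-building characters.
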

\begin{proof}
It is straightforward to construct a compact boolean group
$T=\cl{\las}$ where $S\to0$ is an infinite independent subset. Let $G$
be the inductive limit of $T^n$, $n\in\omega$, and $\tau(\U)$ be the
topology inherited from the product topology on $T^\omega$. Put
$S_n=\{0\}^{n-1}\times S$. Note that the topology of $G$ is determined
by the family $\K=\set T^n:n\in\omega.$. All the properties of $\gku$
are easy to check.
\end{proof}

We now present the recursive construction of the main example.
Since all the groups $G_\alpha$ in the construction have cardinality
$2^\omega$ we will assume that every $G_\alpha$ is a subgroup
(algebraically) of $2^\omega$. Let $\set
C_\alpha:\alpha<\omega_1.\subseteq[2^\omega]^\omega$ and $\set
P_\alpha:\alpha<\omega_1.\subseteq[2^\omega]^\omega$ be some families
of subsets of $2^\omega$. 

Let $\gks{_0}=\gku$ where the convenient triple $\gku$ and the
sets $S_n\subseteq G$ have been constructed in Lemma~\ref{g.kw}.
Below we use the notation
$\ccl{C_\alpha}$ for the closure of $C_\alpha$ in $\kw(\K_{<\alpha})$.

\begin{lemma}\label{g.ind}
There exists a pse-chain $\set\gks{_\alpha}:\alpha<\omega_1.$ such that
\begin{countup}[through]
\item\label{g.fr}every $K\in\K_\alpha$ satisfies~\ref{g.position};

\item\label{g.cc}
if $P_\alpha\subseteq G_{<\alpha}$ is an infinite subset that is
closed and discrete in $\kw(\K_{<\alpha})$ then
there exists an infinite $S_\alpha\subseteq P_\alpha$ such that
$S_\alpha\to s_\alpha$ in $\kw(\K_{\alpha)})$;

\item\label{g.so}
if $0\in\overline{P_\alpha}^{\kw(\K_{<\alpha})}$ then there
exists an infinite $S_\alpha\subseteq P_\alpha$ such that
$S_\alpha\to0$ in $\kw(\K_{\alpha)})$; 

\item\label{g.seq}
if $0\in\cl{\ccl{C_\alpha}}^{\tau(\U_{<\alpha})}$ then there exists
an infinite $T_\alpha\subseteq C_\alpha$ such that
$T_\alpha\to0$ in $\kw(\K_{\alpha})$;

\end{countup}
\end{lemma}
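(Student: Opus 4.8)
The plan is to build the pse-chain $\set\gks{_\alpha}:\alpha<\omega_1.$ by recursion on $\alpha<\omega_1$, starting from $\gks{_0}=\gku$ of Lemma~\ref{g.kw} (which satisfies~\ref{g.fr} since each $K=T^n$ meets only finitely many of the $S_m$), taking unions at limits via Lemma~\ref{stack.limit} (which yields a convenient triple and preserves~\ref{g.fr}, as~\ref{g.position} involves only the $\la{S_n}$ already present and the fact that $\K^{<\gamma}=\cup_\sigma\K^\sigma$), and at successor stage $\alpha=\beta+1$ choosing the set $D_\alpha$ over which to take a pse of $\gks{_{<\alpha}}=\gks{_\beta}$ carefully so that the single new sequence handles whichever of~\ref{g.cc},~\ref{g.so},~\ref{g.seq} is triggered by the bookkeeping object $P_\alpha$ or $C_\alpha$. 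Using a suitable enumeration of $\set C_\alpha:\alpha<\omega_1.$ and $\set P_\alpha:\alpha<\omega_1.$ (with each relevant set appearing cofinally often, or relativized to the groups $G_{<\alpha}$ as they appear), at stage $\alpha$ at most one of the three hypotheses is active; the construction is symmetric in the three cases, so I describe the mechanism once.

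Suppose the active hypothesis produces a candidate subset of $G_{<\alpha}$ that must be turned into a sequence converging to $0$ (or to some $s_\alpha$). In case~\ref{g.so}, since $0\in\cl{P_\alpha}^{\kw(\K_{<\alpha})}$ and the topology is $\kw(\K_{<\alpha})$, there is a single $K\in\K_{<\alpha}$ with $0\in\cl{P_\alpha\cap K}$, and as $K$ is compact metrizable we may thin $P_\alpha$ to an infinite $S_\alpha\subseteq P_\alpha\cap K$ with $S_\alpha\to0$ in $K$, hence in $\kw(\K_{<\alpha})$; then apply Lemma~\ref{free.sequence} to extract an infinite independent $D_\alpha\subseteq S_\alpha$ with $\la{D_\alpha}$ closed and discrete in $\kwk_{<\alpha}$ — but since $S_\alpha$ already converges, instead I will take $D_\alpha$ to be an independent subset of a small perturbation of $S_\alpha$ using Lemma~\ref{pse.successor}'s second clause, so that the pse $\gks{_\alpha}$ over $D_\alpha$ has $D_\alpha\to0$ in $\tau(\U_\alpha)$, and therefore $S_\alpha\to0$ in $\kw(\K_\alpha)$ as required. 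Case~\ref{g.cc} is identical except the target point is $s_\alpha=0$ after translating by an element of the relevant compact subgroup (using that $P_\alpha$ closed discrete in a $\kw$ space, together with Lemma~\ref{free.sequence}, yields an infinite independent $D_\alpha\subseteq P_\alpha$ with $\la{D_\alpha}$ closed discrete, over which we take the pse, getting $S_\alpha=D_\alpha\to0=s_\alpha$). Case~\ref{g.seq} handles the $\tau(\U_{<\alpha})$-closure: if $0\in\cl{\ccl{C_\alpha}}^{\tau(\U_{<\alpha})}$, pick a sequence in $\ccl{C_\alpha}$ converging to $0$ in $\tau(\U_{<\alpha})$, pull it back to an infinite $T_\alpha\subseteq C_\alpha$ whose $\kw(\K_{<\alpha})$-closure meets every $\tau(\U_{<\alpha})$-neighborhood of $0$, extract an independent $D_\alpha$ via Lemma~\ref{free.sequence}, and invoke the second clause of Lemma~\ref{pse.successor} to arrange $D_\alpha\to0$, hence $T_\alpha\to0$, in $\tau(\U_\alpha)\subseteq\kw(\K_\alpha)$.

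The crucial verification at each successor step is the preservation of~\ref{g.fr}: I must check that after forming the pse over $D_\alpha$, every $K'\in\K_\alpha$ still satisfies~\ref{g.position}. By the remark following Definition~\ref{pse}, each $K'\in\K_\alpha$ has the form $K+L$ where $K\in\K_{<\alpha}$ and $L=\cl{\la{D_\alpha}}^{\tau(\U_\alpha)}$; since $\la{D_\alpha}$ is closed and discrete in $\kwk_{<\alpha}$ and each $S_n\subseteq K(n)\in\K_{<\alpha}$, Lemma~\ref{ortho} applies verbatim (it was proved for exactly this configuration) and gives that $K+\la{D_\alpha}$, and hence $K'=K+L$ once we note $K'\cap G_{<\alpha}=K+\la{D_\alpha}$ and $L\setminus G_{<\alpha}$ contributes no new elements of any $\la{S_n}\subseteq G_{<\alpha}$, satisfies~\ref{g.position}.

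The main obstacle is exactly this interaction between the three ``make a sequence converge'' demands and the rigidity demand~\ref{g.fr}: a priori the set $D_\alpha$ forced on us by~\ref{g.cc}/\ref{g.so}/\ref{g.seq} need not be independent with $\la{D_\alpha}$ closed discrete, which is precisely what Lemma~\ref{pse.successor} and Lemma~\ref{ortho} require as input. The resolution is to always pass first through Lemma~\ref{free.sequence} (legitimate because in each case the relevant ambient space is a $\kw$-pair and the set in hand is infinite, and either already discrete or thinnable to a discrete set after moving inside a single compact $K$), thereby obtaining an independent $D_\alpha$ with $\la{D_\alpha}$ closed and discrete before taking the pse; the bookkeeping then only needs to guarantee that each hypothesis is consulted at a stage $\alpha$ where the witnessing subset of $G_{<\alpha}$ has already appeared, which is arranged by a standard closing-off argument since every $G_{<\alpha}$ is countable and contained in $2^\omega$.
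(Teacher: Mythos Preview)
Your overall architecture---recurse, preserve~\ref{g.fr} via Lemma~\ref{ortho} together with~\ref{pse.resolve}, and at each stage manufacture a pse over a suitable $D_\alpha$---is the paper's, and your verification of~\ref{g.fr} is correct and essentially identical to the paper's. Two points need repair. First, a pse-chain by definition has \emph{every} $\gks{_\alpha}$ a pse of $\gks{_{<\alpha}}$; you may not merely take unions at limit $\alpha$. Since conditions~\ref{g.cc}--\ref{g.seq} are indexed by $\alpha$, they must also be secured at limit $\alpha$. The fix is routine: at every $\alpha$ first form $\gks{_{<\alpha}}$ (the union at limits, $\gks{_\beta}$ when $\alpha=\beta+1$) and then take a pse of it.

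The more serious gap is your extraction of $D_\alpha$ in case~\ref{g.so}. The claim that $0\in\cl{P_\alpha}^{\kw(\K_{<\alpha})}$ yields a single $K\in\K_{<\alpha}$ with $0\in\cl{P_\alpha\cap K}$ is a Fr\'echet-type assertion that $\kw$ spaces need not satisfy; the ensuing talk of a sequence already converging in $\kw(\K_{<\alpha})$ and then ``perturbing'' does not lead anywhere. The paper's mechanism uses a piece of structure you never invoke: $\tau(\U_{<\alpha})$ is \emph{first countable} (each $\U_\sigma$ is countable and $\alpha<\omega_1$) and coarser than $\kw(\K_{<\alpha})$. Hence $0\in\cl{P_\alpha}^{\kw}$ gives $0\in\cl{P_\alpha}^{\tau(\U_{<\alpha})}$, so one picks $D\subseteq P_\alpha$ with $D\to0$ in $\tau(\U_{<\alpha})$; if no subsequence already converges in $\kw$, then $D$ is closed discrete there, Lemma~\ref{free.sequence} applies, and the second clause of Lemma~\ref{pse.successor} forces $D\to0$ in $\tau(\U_\alpha)\subseteq\kw(\K_\alpha)$. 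The same device handles~\ref{g.seq} without any ``pull back'' (since $\cl{\ccl{C_\alpha}}^{\tau(\U_{<\alpha})}=\cl{C_\alpha}^{\tau(\U_{<\alpha})}$, pick $D\subseteq C_\alpha$ directly). For~\ref{g.cc} you should not assert $D_\alpha\to0$: after the pse the compact metrizable $L=\cl{\la{D_\alpha}}^{\tau(\U_\alpha)}$ only guarantees a \emph{subsequence} of $D_\alpha$ converging to some $s_\alpha\in L$, which is all~\ref{g.cc} requires.
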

\begin{proof}
Let $D'\subseteq G_{<\alpha}$ be such that
$0\in\cl{D'}^{\tau(\U_{<\alpha})}$ and there is no $S\subseteq D'$
such that $S\to0$ in $\kw(\K_{<\alpha})$. Pick an infinite $D\subseteq
D'$ such that $D\to0$ in $\tau(\U_{<\alpha})$. Then $D$ is closed and
discrete in $\kw(\K_{<\alpha})$ so using Lemma~\ref{free.sequence} we
may assume that $\lad$ is closed and discrete in
$\kw(\K_{<\alpha})$. Apply Lemma~\ref{pse.successor} to construct a
pse $\gks{_\alpha}$ of $\gks{_{<\alpha}}$ over $D$ such that $D\to0$ in
$\kw(\K_\alpha)$. 

Using either $P_\alpha$ of $\ccl{C_\alpha}$ in place of $D'$, the
argument in the previous paragraph, and putting $S_\alpha=D$ one can
show that~\ref{g.so} and~\ref{g.seq} are
satisfied. Property~\ref{g.cc} can be treated using a similar
construction (note that the conditions for~\ref{g.cc} and~\ref{g.so}
are mutually exclusive) by omitting the condition that $D\to0$ in
$\tau(\U_{<\alpha})$.

Since $\lad$ was chosen to be closed and discrete in
$\kw(\K_{<\alpha})$, Lemma~\ref{ortho} implies that $K+\lad$
satisfies~\ref{g.position}. If $K'\in\kw(\K_\alpha)$ then
$K'=K+\cl{\lad}^{\tau(\U_\alpha)}$ for some
$K\in\kw(\K_{<\alpha})$. Suppose $g\in K'\cap S_n$ for some
$n\in\omega$. Then $g=a+d$ for some $a\in K$ and
$d\in\cl{\lad}^{\tau(\U_\alpha)}$. Since $g\in G_{<\alpha}$, $d\in
G_{<\alpha}$ so by~\ref{pse.resolve} $d\in\lad$. Thus $K'$
satisfies~\ref{g.position} and~\ref{g.fr} holds.
\end{proof}

We next show that when the families $C_\alpha$ and $P_\alpha$ are
chosen to have some special properties, the construction of
Lemma~\ref{g.ind} results in a desired group.

Suppose $\diamondsuit$ holds and let $\set C_\alpha:\alpha<\omega_1.$ be a
$\diamondsuit$-sequence. Identifying $\omega_1$ and $2^\omega$ we may assume that
each $C_\alpha\subseteq2^\omega$.
Let $\set P_\alpha:\alpha<\omega_1.$ list all infinite countable subsets of
$2^\omega$ so that each $P_\alpha$ is listed $\omega_1$ times.

\begin{lemma}\label{e}
Let $\gks{_{\omega_1}}=\gks{_{<\omega_1}}$ where $\gks{_\alpha}$,
$\alpha<\omega_1$ have been constructed in Lemma~\ref{g.ind}. Then
$\kw(\Kw)=\tau(\Uw)$, $\Gw$ is countably compact, Fr\'echet, and not
$\alpha_3$.
\end{lemma}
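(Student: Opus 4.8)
The plan is to verify the four assertions in order, using the bookkeeping built into Lemma~\ref{g.ind} together with the structural lemmas on $\kw$-pairs and convenient triples. First I would settle $\kw(\Kw)=\tau(\Uw)$. The inclusion $\tau(\Uw)\subseteq\kw(\Kw)$ is automatic since each $U\in\Uw$ is open in $\kw(\Kw)$ by the definition of the limit triple and Lemma~\ref{stack.limit}. For the reverse inclusion, suppose $W\subseteq\Gw$ is open in $\kw(\Kw)$ with $0\in W$ but $0\notin\cl{W^c}^{\tau(\Uw)}$ fails, i.e.\ $W$ is not a $\tau(\Uw)$-neighborhood of some point; replacing $W$ by a translate it suffices to handle $0$. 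If $W$ were not $\tau(\Uw)$-open at $0$, then $D'=\Gw\setminus W$ would have $0$ in its $\tau(\Uw)$-closure, hence $0\in\cl{D'}^{\tau(\U_{<\alpha})}$ for some $\alpha$ (using $\omega$-boundedness of the relevant sets and that $\Uw=\bigcup\U_\alpha$ up to closures), and at a stage where $P_\alpha$ or $\ccl{C_\alpha}$ captures a countable piece of $D'$ we would, by~\ref{g.so} (or~\ref{g.seq}), get a sequence $S\subseteq D'$ with $S\to0$ in $\kw(\K_\alpha)\subseteq\kw(\Kw)$, contradicting that $W$ is $\kw(\Kw)$-open. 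Thus the two topologies coincide. The same style of argument — every countable subset accumulating at a point eventually gets a convergent subsequence handed to it by $\diamondsuit$-indexed bookkeeping — gives \emph{sequentiality} of $\Gw$: if $A\subseteq\Gw$ is not closed, pick $g\in\cl A\setminus A$; translating, take $g=0$, so $0\in\cl A^{\tau(\Uw)}$ by the first part, and some $P_\alpha\subseteq A$ (chosen cofinally often) with $0$ in its closure yields $S_\alpha\subseteq A$, $S_\alpha\to0$.

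Next, \emph{Fr\'echet}: it suffices to upgrade sequentiality. Since $\Gw$ is boolean and $\tau(\Uw)=\kw(\Kw)$ has a base $\Uw$ of open subgroups (Lemma~\ref{stack.limit}), the base is \emph{linear}, so Lemma~\ref{pl.f} applies verbatim: a countable sequential group with a linear base is Fr\'echet. (Countability of $\Gw$ is not literally true — $|\Gw|=\C$ — but the proof of Lemma~\ref{pl.f} only uses that convergent sequences generate countable, hence separable metrizable-in-$\tau(\Uw)$, subgroups whose closures cannot be countably compact unless finite; the argument via the closed copy of $L$ and the forbidden copy of $S(\omega)$ goes through for any sequential group with a linear base, so one states and uses that mild generalization, or restricts attention to the relevant countable subgroups.) The point is that a convergent $S\to0$ has $\cl{\la{S\setminus F}}\subseteq U$ for some open subgroup $U$ and finite $F$, this closure is metrizable and not countably compact (being infinite with a dense set of isolated points, by discreteness of $\la{S}$ in the appropriate $\kw$ level), hence contains a closed discrete copy of $\omega\times\omega$ over a limit, forbidding $S(\omega)$ — exactly as in Lemma~\ref{pl.f}.

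For \emph{countable compactness} I would use~\ref{g.cc}: given an infinite $P\subseteq\Gw$, either $P$ has an accumulation point in $\kw(\Kw)$ and we are done, or $P$ is closed discrete in $\kw(\Kw)$, hence $P\subseteq\bigcup_\sigma H^\sigma$ closed discrete already in some $\kw(\K_{<\alpha})$ (by $\omega$-boundedness and Lemma~\ref{stack.limit}), and then at one of the stages $\alpha$ with $P_\alpha=P\cap G_{<\alpha}$ infinite, clause~\ref{g.cc} produces $S_\alpha\subseteq P_\alpha\subseteq P$ with $S_\alpha\to s_\alpha$ in $\kw(\K_\alpha)\subseteq\kw(\Kw)$, so $P$ has an accumulation point after all — contradiction. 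Hence every infinite subset has an accumulation point, i.e.\ $\Gw$ is countably compact. Finally, \emph{not $\alpha_3$}: the sequences $S_n\to0$ from Lemma~\ref{g.kw} survive the recursion, and property~\ref{g.position}, maintained at every stage via Lemma~\ref{ortho} and recorded in~\ref{g.fr}, says that each $K\in\Kw$ meets $\la{S_n}$ in a finite set for all large $n$. Any diagonal sequence $S\to0$ lies in some $K\in\Kw$ (as $\kw(\Kw)$ is $\kw$ with $\cup\Kw=\Gw$, a convergent sequence together with its limit is compact, hence contained in a single $K$), so $S\cap\la{S_n}$ — a fortiori $S\cap S_n$ — is finite for all but finitely many $n$; thus no diagonal sequence can meet infinitely many $S_n$ in an infinite set, violating $\alpha_3$.

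The main obstacle I expect is the bookkeeping needed to show $\kw(\Kw)=\tau(\Uw)$ and sequentiality: one must argue that an arbitrary countable set accumulating at a point in the \emph{final} topology already accumulates at that point at some \emph{countable} stage (so that the list of $P_\alpha$'s, each appearing $\omega_1$ times, catches it, and the $\diamondsuit$-sequence $C_\alpha$ catches the sets needed for the $\tau(\U)$-closure condition in~\ref{g.seq}). This is a standard reflection/$\omega$-boundedness argument for $\kw$-limits along a chain of length $\omega_1$, but it is the step where all the hypotheses~\ref{pse.resolve}, \ref{stack.dense}, and Lemma~\ref{stack.limit} are actually consumed, and it must be spelled out carefully to be convincing.
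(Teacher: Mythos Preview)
Your attempt to derive the Fr\'echet property by invoking Lemma~\ref{pl.f} contains a genuine error. You correctly note that $\Gw$ is not countable and try to salvage the argument by claiming that the proof of Lemma~\ref{pl.f} ``only uses that convergent sequences generate countable \ldots\ subgroups whose closures cannot be countably compact unless finite.'' But this is precisely what fails here: you have just finished proving that $\Gw$ is \emph{countably compact}, so every closed subspace of $\Gw$ --- in particular $\cl{\la{S\setminus F}}$ --- is countably compact. The mechanism of Lemma~\ref{pl.f} (producing an infinite closed discrete $D_F\subseteq\cl{\la{S\setminus F}}$ and hence a closed copy of $L$) therefore breaks down completely, and there is no reason the Banakh--Zdomsky obstruction to $S(\omega)$ applies. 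The remark about ``a dense set of isolated points'' does not help either: uncountable countably compact spaces can certainly have dense sets of isolated points.

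The good news is that your detour is unnecessary. The argument you label ``sequentiality'' already \emph{is} the Fr\'echet argument: given $A$ with $0\in\cl A$, a countable $P\subseteq A$ with $0$ in its $\kw(\K_{<\alpha})$-closure is caught as some $P_\alpha$ (cofinally often), and clause~\ref{g.so} hands you $S_\alpha\subseteq P_\alpha\subseteq A$ with $S_\alpha\to0$. That is Fr\'echet at $0$, hence everywhere by homogeneity --- no upgrade step is needed. This is exactly how the paper proceeds: it states flatly that \ref{g.so} implies Fr\'echet. For the equality $\kw(\Kw)=\tau(\Uw)$, the paper makes the reflection step you flag as the ``main obstacle'' precise by taking a countable elementary submodel $M\prec H(\theta)$ containing the construction, using $\diamondsuit$ to find $\gamma=M\cap\omega_1$ with $C_\gamma=A\cap M$, and then invoking clause~\ref{g.seq} (not \ref{g.so}) at stage $\gamma$; the role of $C_\alpha$ versus $P_\alpha$ is thus sharply separated in the paper, whereas your sketch blurs them. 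Your treatments of countable compactness via~\ref{g.cc} and of the failure of $\alpha_3$ via~\ref{g.fr} match the paper.
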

\begin{proof}
Suppose $A\subseteq\Gw$ is such that $0\not\in A$, $A\cap K$ is closed
for every $K\in\Kw$ and $A\cap U\not=\varnothing$ for every
$U\in\Uw$.

Let $\theta$ be large enough. Consider the sets of the form $A\cap M$
where $M$ is a countable elementary submodel of
$H(\theta)$ and $X\in M$ is a countable set containing the details of the
construction of $\Gw$. The set
$$
\set\gamma\in\omega_1:\gamma=M\cap\omega_1, X\in M, M\leq H(\theta).
$$
is a club in $\omega_1$. Thus $C_\gamma=A\cap M$ for some
$\gamma<\omega_1$ where $M\cap\omega_1=\gamma$. Note that
$\gamma<\omega_1$ is a limit and $\ccl{C_\gamma}=A\cap G_{<\gamma}$.

Let $U\in\U_\alpha$ for some $\alpha<\gamma$. Then
$\cl{U}^{\kw(\Kw)}\in\tau(\Uw)$ so there exists a $\beta\geq\alpha$, a
$K\in\K_\beta$ and a $g\in A\cap K\cap\cl{U}^{\kw(\K_\beta)}$ by the
choice of $A$. By elementarity, we may assume that $\beta<\gamma$ and
$g\in M$ so $g\in A\cap\cl{U}^{\kw(\K_{<\gamma})}$. Thus
$0\in\cl{\ccl{C_\gamma}}^{\tau(\U_{<\gamma})}$ and by~\ref{g.seq}
there exists a $T_\gamma\subseteq\ccl{C_\gamma}\subseteq A$ such that
$T_\gamma\to0$ in $\kw(\K_\gamma)$. This shows that
$\kw(\Kw)=\tau(\Uw)$.

Now simple arguments show that~\ref{g.cc} implies the countable
compactness of $\kw(\Kw)$, \ref{g.so} implies that $\kw(\Kw)$ is
Fr\'echet, and~\ref{g.fr} implies that $\kw(\Kw)$ is not $\alpha_3$.
\end{proof}

Lemmas~\ref{g.ind} and~\ref{e} now imply the following theorem.

\begin{theorem}[$\diamondsuit$]\label{f.na3}
There exists a countably compact boolean Fr\'echet group that is not
$\alpha_3$.
\end{theorem}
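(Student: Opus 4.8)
The plan is to extract Theorem~\ref{f.na3} as a direct corollary of the machinery already assembled in Lemmas~\ref{g.kw}, \ref{g.ind}, and~\ref{e}. Concretely, I would start from the convenient triple $\gku$ produced by Lemma~\ref{g.kw}, together with its distinguished sequences $S_n\to0$, and set $\gks{_0}=\gku$. I would then invoke Lemma~\ref{g.ind} to obtain a pse-chain $\set\gks{_\alpha}:\alpha<\omega_1.$ satisfying properties~\ref{g.fr}--\ref{g.seq}, where the bookkeeping families are chosen exactly as in the paragraph preceding Lemma~\ref{e}: the $C_\alpha$'s form a $\diamondsuit$-sequence (identifying $\omega_1$ with $2^\omega$), and the $P_\alpha$'s enumerate all countably infinite subsets of $2^\omega$ with each listed cofinally often. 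Taking the direct limit $\Gw=\gks{_{<\omega_1}}$ at stage $\omega_1$, Lemma~\ref{stack.limit} (applied at the limit stage, noting the family $\Uw$ still forms a base of clopen finite-index subgroups) guarantees $\Gw$ is a boolean group carrying the precompact topology $\tau(\Uw)$, and Lemma~\ref{e} then yields that $\kw(\Kw)=\tau(\Uw)$ and that this topology is countably compact, Fr\'echet, and not $\alpha_3$. That is precisely the assertion of the theorem.

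The one point that needs a word of care is verifying that the recursion of Lemma~\ref{g.ind} can actually be run to length $\omega_1$ under $\diamondsuit$ — i.e.\ that the hypotheses~\ref{stack.order} and~\ref{stack.dense} needed to form $\gks{_{<\gamma}}$ are maintained at every stage, and that the ``mutually exclusive'' clauses~\ref{g.cc} and~\ref{g.so} never collide. Since Lemma~\ref{pse.successor} produces a genuine pse at each successor step (so~\ref{pse.order} and~\ref{pse.resolve} hold, giving~\ref{stack.order}/\ref{stack.dense} for the next stage) and Lemma~\ref{stack.limit} handles the limit stages with $\gamma<\omega_1$ staying inside the convenient-triple class, the induction goes through; the $\diamondsuit$-sequence is used only to choose the $C_\alpha$'s, so $\diamondsuit$ is the sole extra set-theoretic hypothesis and it is invoked in a black-box fashion inside Lemma~\ref{e}. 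I would phrase the proof in two sentences: fix a $\diamondsuit$-sequence and the enumeration of $[2^\omega]^\omega$ as above, apply Lemmas~\ref{g.kw} and~\ref{g.ind} to build the pse-chain, and then quote Lemma~\ref{e} for the conclusion.

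The step I expect to be the genuine obstacle — though it has already been discharged in the preceding lemmas, so in the theorem's proof it is merely cited — is the reflection argument inside Lemma~\ref{e} that forces $\kw(\Kw)=\tau(\Uw)$: one must see that for $A\subseteq\Gw$ with $0\notin A$, $A$ sequentially closed in every $K\in\Kw$, and $A$ meeting every $U\in\Uw$, the $\diamondsuit$-sequence catches $A\cap M$ on a club of countable elementary submodels $M$, and that at such a ``guessed'' stage $\gamma$ the hypothesis $0\in\cl{\ccl{C_\gamma}}^{\tau(\U_{<\gamma})}$ of clause~\ref{g.seq} is in fact satisfied (this uses the closure of $\Uw$ under $\kw(\Kw)$-closures together with elementarity to reflect a witness $g\in A\cap K\cap\cl U$ down into $M$). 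Granting that, countable compactness follows from~\ref{g.cc}, the Fr\'echet property from~\ref{g.so} applied to arbitrary $P_\alpha\to0$, and the failure of $\alpha_3$ from~\ref{g.fr}: the sequences $S_n$ witness that no diagonal sequence can meet infinitely many of them infinitely, since any convergent $S\to0$ lies (mod finite) in some $K\in\Kw$, and $K$ meets only finitely many $\la{S_n}$ in an infinite set by~\ref{g.position}.

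\begin{proof}
Assume $\diamondsuit$. Identifying $\omega_1$ with $2^\omega$, fix a $\diamondsuit$-sequence $\set C_\alpha:\alpha<\omega_1.$ and an enumeration $\set P_\alpha:\alpha<\omega_1.$ of all countably infinite subsets of $2^\omega$ in which each set appears $\omega_1$ times. Let $\gks{_0}=\gku$ be the convenient triple, and $S_n\subseteq G$ the sequences, supplied by Lemma~\ref{g.kw}. By Lemma~\ref{g.ind} there is a pse-chain $\set\gks{_\alpha}:\alpha<\omega_1.$ satisfying properties~\ref{g.fr}--\ref{g.seq}; at limit stages $\gamma<\omega_1$ the triple $\gks{_{<\gamma}}$ is convenient by Lemma~\ref{stack.limit}, so the recursion proceeds through all $\alpha<\omega_1$. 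Let $\gks{_{\omega_1}}=\gks{_{<\omega_1}}$. By Lemma~\ref{e}, $\kw(\Kw)=\tau(\Uw)$, and $\Gw$ is a boolean group whose topology is simultaneously $\kw$ and precompact, countably compact, Fr\'echet, and not $\alpha_3$. This is the required example.
\end{proof}
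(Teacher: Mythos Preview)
Your proposal is correct and follows exactly the paper's own route: the paper simply states that Lemmas~\ref{g.ind} and~\ref{e} imply the theorem, and your write-up merely makes this assembly explicit (together with the bookkeeping via Lemmas~\ref{g.kw} and~\ref{stack.limit}). One cosmetic point: at stage $\omega_1$ the family $\Kw$ is uncountable, so the phrase ``simultaneously $\kw$ and precompact'' is a slight abuse of Definition~\ref{kw.topology}---$\Gw$ is determined by $\Kw$ but is not literally $k_\omega$ (indeed Lemma~\ref{k.precom} would otherwise force compactness)---though this does not affect the argument.
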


The next corollary provides a counterexample to Conjecture~9.4
in~\cite{Sha2} and a partial negative answer to Question~4.3 in~\cite{Sha1}.

\begin{corollary}[$\diamondsuit$]\label{c.f.na3}
There exists a precompact countable Fr\'echet boolean group $G$ with a base of
neighborhoods of $0$ consisting of subgroups of finite index such that
$G$ is not $\alpha_3$.
\end{corollary}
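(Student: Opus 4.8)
The plan is to derive Corollary~\ref{c.f.na3} from Theorem~\ref{f.na3} by passing to a suitable countable dense subgroup of the group constructed there. Let $\Gw$ be the countably compact boolean Fr\'echet group that is not $\alpha_3$ produced by Theorem~\ref{f.na3}, equipped with the topology $\kw(\Kw)=\tau(\Uw)$. Since $\tau(\Uw)$ is precompact with a base $\Uw$ consisting of subgroups of finite index, $\Gw$ densely embeds into a compact boolean group $\oG$ whose topology also has a base of (clopen) subgroups of finite index. The first step is to select inside $\Gw$ a countable subgroup $G$ that witnesses the failure of $\alpha_3$: by~\ref{g.fr}, there is a family $\set S_n:n\in\omega.$ of sequences converging to $0$ such that every $K\in\Kw$ satisfies~\ref{g.position}, and the non-$\alpha_3$-ness is exhibited on this fixed countable family of sequences. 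Let $G$ be the subgroup generated by $\cup_n S_n$ together with the limit points $s_\alpha$; this is countable, and I must check it is dense in $\Gw$ — or, if it is not, simply replace $G$ by a countable dense subgroup containing all the $S_n$'s and their limits, which exists since $\Gw$ has a countable $\pi$-base inherited from the construction (alternatively, work with a countable elementary submodel as in the proof of Lemma~\ref{e}).

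Next I would verify that the subspace/subgroup $G$ inherits the three required properties. Precompactness is automatic for any subgroup of a precompact group, and the base of subgroups of finite index is inherited by intersecting the members of $\Uw$ with $G$ (a finite-index subgroup of $\oG$ meets $G$ in a finite-index subgroup of $G$, and these remain open in the subspace topology). The failure of $\alpha_3$ transfers downward: the sequences $S_n$ and their common limit $0$ all lie in $G$, and for any diagonal sequence $S\subseteq G$ converging to $0$ in $G$, $S$ also converges to $0$ in $\Gw$, so $S$ lies (modulo finitely many points) inside some $K\in\Kw$ by the $\kw$-property, and then~\ref{g.position} forces $S\cap\langle S_n\rangle$ to be finite for all large $n$ — contradicting the $\alpha_3$ requirement. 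The delicate point is the Fr\'echet property: Fr\'echetness is not in general inherited by dense subgroups, so I would instead argue that $G$ is sequential (being a countable subgroup of the Fr\'echet space $\Gw$, every countable subspace of which is sequential, since countable Fr\'echet implies... ) — more robustly, I invoke Corollary~\ref{b.f}: $G$ is a countable precompact boolean group, so as soon as it is shown to be sequential it is automatically Fr\'echet.

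So the crux reduces to showing $G$ is sequential. Here I would use that $\Gw$ is Fr\'echet and that $G$ is chosen to be "sequentially closed-rich" enough: concretely, for any $A\subseteq G$ with a point $x\in\cl{A}^{G}\setminus A$, we have $x\in\cl{A}^{\Gw}$, and Fr\'echetness of $\Gw$ gives a sequence $T\subseteq A\subseteq G$ with $T\to x$ in $\Gw$, hence in $G$; this shows $G$ is in fact Fr\'echet directly, with no need to route through sequentiality. The only thing to be careful about is that $\cl{A}^{G}\subseteq\cl{A}^{\Gw}\cap G$, which is immediate, and that the witnessing sequence from $A$ stays in $G$, which it does because $A\subseteq G$. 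Thus Fr\'echetness is actually inherited here because the ambient space is Fr\'echet and we only ever extract sequences from subsets of $G$ itself — the usual obstruction (needing limit points that lie outside the subgroup) does not arise for the Fr\'echet property, only closure points of subsets of $G$ matter. I expect the main obstacle to be purely bookkeeping: ensuring the chosen countable $G$ is simultaneously dense in $\Gw$ (so that the inherited base still consists of \emph{finite-index} subgroups and no proper clopen subgroup of $\oG$ is missed) and contains the full non-$\alpha_3$ witness $\cup_n S_n\cup\set s_\alpha:\dots.$; a countable elementary submodel $M\prec H(\theta)$ with the construction data and all $S_n$'s inside it, yielding $G=\Gw\cap M$, accomplishes both at once, and the verification then proceeds exactly as above.
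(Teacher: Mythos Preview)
Your approach is correct and essentially the same as the paper's one-sentence proof: take $G$ to be the countable subgroup of $\Gw$ generated by the sheaf $\cup_n S_n$ witnessing the failure of $\alpha_3$, and observe that precompactness, the linear base of finite-index subgroups, Fr\'echetness, and non-$\alpha_3$ all pass to this subgroup. Your detours are unnecessary: Fr\'echetness is hereditary to \emph{every} subspace (your eventual argument is exactly the standard one-line proof of this general fact, so there was never anything delicate and no need to route through Corollary~\ref{b.f}), and the finite-index condition on the base is inherited by \emph{any} subgroup since $[G:G\cap U]\leq[\Gw:U]<\infty$ for $U\in\Uw$ --- so neither density, nor the elementary-submodel apparatus, nor the unrelated points $s_\alpha$ (the $S_n$ all converge to $0$) are needed.
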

\begin{proof}
Let $G$ be the countable group algebraically generated by the sheaf
that provides a counterexample to the $\alpha_3$ property in the group
from Theorem~\ref{f.na3}.
\end{proof}

\section{Questions.}
Corollary~\ref{b.f} and Theorem~\ref{g.Cohen} (see also the remark
following Theorem~\ref{g.Cohen}) leave open a number of natural
questions. The question below seems to be open even for $\Z$.

\begin{question}\label{cps.f}
Is every countable precompact sequential group Fr\'echet in ZFC?
\end{question}

A negative answer to Question~\ref{cps.f} would follow from a
negative answer to Question~\ref{Sw.p} below (in ZFC). Such a negative
answer would also allow the (consistent) extension of the conclusion of
Theorem~\ref{g.Cohen} to the class of all (not necessarily countable)
precompact groups.

\begin{question}\label{Sw.p}
Can a precompact sequential group contain a closed copy of $S(\omega)$?
\end{question}

The conclusion of Lemma~\ref{ssc.a} can be generalized to groups
having the maximal precompact topology in which a countable family of
sequences converge to $1$. A more careful proof shows that when
`countable' is replaced by `analytic' the resulting topology can be
shown to be {\em coanalytic\/} (i.e.~a complement of some analytic
set). Note also that it can be shown that the set of convergent
sequences of a space with an analytic topology is coanalytic. This
leads to the following natural question.

\begin{question}\label{mpg.a}
Let $G$ be a countable group and ${\cal S}\subseteq 2^G$ be a Borel
(analytic, coanalytic) family of subsets. Let $\tau$ be the maximal
precompact group topology on $G$ such that $S\to1$ in
$\tau$ for each $S\in{\cal S}$. Must $\tau$ be analytic?
\end{question}

The result of Theorem~\ref{f.na3} hints at the possible positive
answers to the next two questions. Note that such groups do not exist
in ZFC alone and cannot be $\alpha_3$. Thus a positive answer to one
of the questions below would strengthen the conclusion of
Theorem~\ref{f.na3}.

\begin{question}\label{cc.F.p}
Do there exist countably compact Fr\'echet groups $H$ and $G$ such
that $H\times G$ is not Fr\'echet?
\end{question}

\begin{question}\label{cc.F.s}
Does there exist a countably compact Fr\'echet group whose square is
not Fr\'echet?
\end{question}


\begin{thebibliography}{999}
\bibitem{Ar1}A.~Arkhangel'skii,
{\it The frequency spectrum of a topological space and the classification of spaces} (Russian),
Doklady Akad.\ Nauk SSSR, {\bf206} (1972), pp.~265--268

\bibitem{Ar2}A.~Arkhangel'skii,
{\it The spectrum of frequencies of a topological space and the product operation} (Russian),
Trudy Moskov.\ Mat.\ Obshch., {\bf40} (1979), pp~171--206

\bibitem{ArTk}A.~Arhangel’skii, M.~Tkachenko, {\it Topological Groups
and Related Structures}, Atlantis Press/World Scientific 2008

\bibitem{BZ}T.~Banakh, L.~Zdomsky,
{\it The topological structure of (homogeneous) spaces and groups with
countable $cs^*$-network},
Appl.\ Gen.\ Topol., {\bf5}, no.~1, (2004), pp.~25--48

\bibitem{DGT}D.~Dikranjan, S.S.~Gabriyelyan, and V.~Tarieladze,
{\it Characterizing sequences for precompact group topologies},
Journal of Mathematical Analysis and Applications {\bf412}, no.~1, (2014), pp.~505--519

\bibitem{HR}E.~Hewitt, K.A.~Ross,
{\it Abstract harmonic analysis. Vol.~2: Structure of topological
groups. Integration theory}, 2nd ed., Springer 1994


\bibitem{HM}K.H.~Hofmann and S.A.~Morris,
{\it The Structure of Compact Groups: A Primer for Students---a
Handbook for the Expert}, De Gruyter Studies in
Mathematics, Berlin-New York, 2006

\bibitem{HRG1}M.~Hru\v s\'ak, U.A.~Ramos-Garc\'\i a,
{\it Precompact Fr\'echet topologies on Abelian groups}, Topology and
its Applications, {\bf159} (2012), pp.~3605--3613

\bibitem{Ku}K.~Kunen, {\em Set theory: An Introduction to
  Independence Proofs}, Vol.~{\bf 102} of Studies in Logic and the
  Foundations of Mathematics, North-Holland, Amsterdam 1980

\bibitem{Ny1}P.~J.~Nyikos, {\em Metrizability and the
  Fr\'echet-Urysohn property in topological groups},
  Proc.\ Amer.\ Math.\ Soc.\ {\bf 83} (1981), pp.~793--801

\bibitem{PZ}I.V.~Protasov, E.G.~Zelenyuk,
{\em Topologies on groups determined by sequences}, Mathematical Studies Monograph Series,
{\bf4}, VNTL Publishers, L’viv, (1999), 111 pp.

\bibitem{Sha1}D.~B.~Shakhmatov, {\em Convergence in the presence of
  algebraic structure}, Recent progress in general topology, II,
  North-Holland, Amsterdam 2002, pp.463--484

\bibitem{Sha2}D.~Shakhmatov,
{\it $\alpha_i$-properties in Fr\'echet-Urysohn
topological groups}, Topology Proceedings, {\bf15} (1990),
pp.~143--183

\bibitem{ShaShi}D.~Shakhmatov, A.~Shibakov,
{\it Countably compact sequential groups and sequential order},
preprint, {\tt arXiv:1903.08041 [math.GN]}

\bibitem{Sh1}A.~Shibakov,
{\it On sequential analytic groups}, Proc.\ Amer.\ Math.\ Soc.\ {\bf145} (2017), pp.~4087--4096

\bibitem{Sh3}A.~Shibakov,
{\it Convergence in topological groups and the Cohen reals}, Topology and its Applications,
{\bf252}, (2019), pp.~81--89

\bibitem{Sh5}A.~Shibakov,
{\it No interesting sequential groups}, Topology and its Applications,
{\bf228}, (2017), pp.~125--138

\bibitem{TU}S.~Todor\v cevi\'c and C.~Uzc\'ategui,
{\it Analytic topologies over countable sets}, Topology and its
Applications, {\bf111}, no.~3, (2001), pp.~299--326

\bibitem{TU1}S.~Todor\v cevi\'c and C.~Uzc\'ategui,
{\it Analytic $k$-spaces}, Topology and its
Applications, {\bf146--147}, no.~1, (2005), pp.~511--526


\end{thebibliography}
\end{document}